\newcommand{\leg}{\ensuremath{\Lambda}}
\def\rr{\mathbb R}
\theoremstyle{plain} 
\newtheorem{thm}{Theorem}
\newtheorem*{class-thm}{Theorem}
\newtheorem{cor}[thm]{Corollary} 
\newtheorem{prop}[thm]{Proposition}
\newtheorem{ques}[thm]{Question}
\newtheorem{lem}[thm]{Lemma}
\theoremstyle{definition} 
\newtheorem{defn}[thm]{Definition}
\theoremstyle{remark} 
\newtheorem{rem}{Remark}
\theoremstyle{Example}
\numberwithin{thm}{section}
\numberwithin{rem}{section}
\definecolor{darkmagenta}{rgb}{.5,0,.5}
\definecolor{dgr}{rgb}{0,.5, 0}
\definecolor{g2}{rgb}{.6,0, 1}
\def\Im{\operatorname{Im}}
\def\ind{\operatorname{ind}}
\begin{document}

\title[Non-Orientable Lagrangian Endocobordisms]{Non-Orientable Lagrangian Cobordisms between Legendrian Knots}

\author[O. Capovilla-Searle]{Orsola Capovilla-Searle} \address{Bryn Mawr College,
Bryn Mawr, PA 19010 and Duke University, Durham NC  27708 } \email{ocapovilla@brynmawr.edu}  
\author[L. Traynor]{Lisa Traynor} \address{Bryn Mawr College, Bryn
Mawr, PA 19010} \email{ltraynor@brynmawr.edu} 

\begin{abstract} In the symplectization of standard contact $3$-space, $\mathbb R \times \mathbb R^3$, 
it is known that an orientable Lagrangian cobordism between a Legendrian knot  and itself, also known as an orientable
Lagrangian endocobordism for the Legendrian knot,
  must have genus $0$.  We show that 
any Legendrian knot has a non-orientable Lagrangian endocobordism, and that the  crosscap genus of such a non-orientable
Lagrangian endocobordism must be a positive
multiple of $4$.  The more restrictive exact,  non-orientable Lagrangian endocobordisms
do not exist for any exactly fillable Legendrian knot but  do exist for any stabilized Legendrian knot.  Moreover, the relation defined by exact, 
non-orientable Lagrangian cobordism on the
set of stabilized Legendrian knots is symmetric
and defines an equivalence
  relation, a contrast to the non-symmetric relation defined by orientable Lagrangian cobordisms.     
 \end{abstract}

\maketitle

\section{Introduction}
\label{sec:intro}


 Smooth cobordisms are a common object of study in topology.  
 Motivated by ideas in symplectic field theory, \cite{egh}, Lagrangian cobordisms that are cylindrical over Legendrian submanifolds outside a compact set     have been an active area of research interest.
Throughout this paper, we will study Lagrangian cobordisms  in the symplectization of the standard contact $\rr^3$, 
namely the symplectic manifold 
$(\rr \times \rr^{3}, d(e^t \alpha) )$ where $\alpha = dz - ydx$, that coincide with the cylinders $\rr \times \leg_+$ (respectively, $\rr \times \leg_-$) when the $\rr$-coordinate
is sufficiently positive (respectively, negative).  Our focus will be  on non-orientable 
Lagrangian cobordisms between Legendrian knots $\leg_+$ and $\leg_-$ and non-orientable Lagrangian endocobordisms, which are non-orientable Lagrangian
cobordisms with $\leg_+ = \leg_-$.
  
   Smooth endocobordisms in $\rr \times \rr^3$ without the Lagrangian condition are abundant:  
for any smooth knot $K \subset \rr^3$,  and an arbitrary $j \geq 0$,  there is a smooth $2$-dimensional
orientable submanifold $M$ of genus $j$  so that $M$ agrees with the cylinder $\rr \times K$ when the $\rr$ coordinate lies
outside an interval $[T_-,T_+]$; the analogous statement holds for
non-orientable $M$ and crosscap genus\footnote{the number of real projective planes in a connected sum decomposition}  when $j > 0$.    For any Legendrian knot $\leg$, it is easy to construct an orientable Lagrangian endocobordism of genus $0$, namely the
trivial Lagrangian cylinder $\rr \times \leg$.
In fact, with the added Lagrangian condition, {\it orientable} Lagrangian endocobordisms must be concordances:
 
 \begin{class-thm}[Chantraine, \cite{chantraine}]  For any Legendrian knot $\leg$, any orientable, Lagrangian endocobordism for $\leg$
    must  have genus $0$.
 \end{class-thm}

 Non-orientable Lagrangian endocobordisms also exist and have topological restrictions:
 \begin{thm}\label{thm:non-exact-endos}  For an arbitrary Legendrian knot $\leg$, there exists a 
 non-orientable Lagrangian endocobordism for $\leg$ 
 of crosscap genus $g$
 if and only if $g \in 4 \mathbb Z^+$.
 \end{thm}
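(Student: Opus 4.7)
The statement has two directions: an obstruction ($g \in 4 \mathbb Z^+$) and an existence claim (realization of every such $g$). I outline each in turn.

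For the obstruction, my plan is to establish a mod-$4$ refinement of Chantraine's identity $\chi(\Sigma) = tb(\leg_-) - tb(\leg_+)$. Chantraine's orientable argument compares the Lagrangian framing of $\nu\Sigma \cong T\Sigma$ (via the complex structure $J$) along the boundary with the contact framing, using $e(\nu\Sigma) = \chi(\Sigma)$. For a non-orientable $\Sigma$, the normal Euler class is not a priori integer-valued, but it admits an integer refinement modulo $4$ via Whitney-Massey type constraints on normal Euler numbers of non-orientable surfaces in orientable $4$-manifolds. Combining this with the Lagrangian identification should yield a congruence of the shape $tb(\leg_+) - tb(\leg_-) \equiv -\chi(\Sigma) \pmod 4$. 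Specializing to an endocobordism, where $tb(\leg_+) = tb(\leg_-)$, forces $\chi(\Sigma) \equiv 0 \pmod 4$. Since $\chi(\Sigma) = -g$ for a non-orientable surface with two boundary circles and crosscap genus $g$, and $g \geq 1$ by non-orientability, we obtain $g \in 4 \mathbb Z^+$.

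For the existence direction, my plan is to construct an explicit crosscap-$4$ endocobordism $\Sigma_4$ for an arbitrary Legendrian $\leg$, and then concatenate $k$ copies of $\Sigma_4$ vertically in the symplectization. Since boundary concatenation adds crosscap genera (the gluing circle has vanishing Euler characteristic), this realizes every value in $4 \mathbb Z^+$. To construct $\Sigma_4$ I would work locally inside a Darboux neighborhood of an arc of $\leg$: starting from the trivial cylinder $\rr \times \leg$, introduce an auxiliary Legendrian via a Lagrangian minimum, attach a finite sequence of Lagrangian handles (including orientation-reversing $1$-handles, each contributing one crosscap) accumulating exactly $4$ crosscaps, and rejoin $\leg$ via a Lagrangian saddle. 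The mod-$4$ obstruction just derived guarantees that $4$ is the minimum crosscap count achievable by such a local construction.

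The principal obstacle is the obstruction direction, specifically the derivation of the mod-$4$ congruence. One must define an integer-refined normal Euler class of a non-orientable Lagrangian cobordism, either via the orientation double cover (treating the covering map into $\rr \times \rr^3$ as a Lagrangian immersion and tracking any necessary corrections from self-covering) or via twisted cohomology, and match the Whitney-Massey $\pmod 4$ constraint against the Lagrangian identification $\nu\Sigma \cong T\Sigma$. Tracking the orientation character $w_1(\Sigma)$ restricted to each boundary circle and arranging the framing comparison so that the boundary contribution is exactly $tb(\leg_+) - tb(\leg_-)$ is the crux. By contrast, the existence direction is largely a concrete construction, reducible to exhibiting one decomposable Lagrangian endocobordism of crosscap genus $4$ in a Darboux chart and then iterating.
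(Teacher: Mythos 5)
There are genuine gaps in both directions of your argument; the existence direction contains the more serious one. A cobordism built inside a Darboux chart from Legendrian isotopies, Lagrangian $1$-handles (surgeries), and minima is \emph{decomposable} and hence \emph{exact}. But the paper's Theorem~\ref{thm:fill-no-endo} (proved via the Seidel isomorphism and the finiteness of augmentations) shows that no exactly fillable Legendrian --- in particular the maximal $tb$ unknot --- admits an exact non-orientable endocobordism. So your local handle construction cannot produce a crosscap genus $4$ endocobordism for an arbitrary $\leg$: some essentially non-exact ingredient is unavoidable. The paper supplies it by placing a (necessarily non-exact) embedded Lagrangian torus $T$ near the trivial cylinder $\rr\times\leg$, perturbing so that the two meet in exactly two transverse double points of indices $+1$ and $-1$, and resolving both by Polterovich's Lagrangian surgery: the first surgery connect-sums the cylinder with $T$, and the second (at the index $-1$ point, now a self-intersection of a connected surface) attaches a Klein bottle summand, yielding an embedded non-orientable endocobordism of crosscap genus $4$; stacking then gives all of $4\mathbb Z^+$. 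Your concatenation step is fine, but the single genus-$4$ building block is exactly what your outline does not deliver.

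For the obstruction, your route --- a relative Whitney--Massey mod~$4$ congruence of the shape $tb(\leg_+)-tb(\leg_-)\equiv -\chi(\Sigma)\pmod 4$ --- is genuinely different from the paper's and could plausibly be made to work, but as written it is a plan rather than a proof: the integer refinement of the normal Euler class of a non-orientable surface with boundary, the matching of the boundary framing contribution with $tb$, and the congruence itself are all deferred, and you identify them yourself as the crux. The paper avoids this relative framing analysis entirely: Lemma~\ref{lem:cpct-closure} produces a \emph{closed} non-orientable Lagrangian $L$ containing a cylindrical slice $[-T,T]\times\leg$ (built from an immersed torus of isotopy traces, desingularized by Lagrangian surgery); replacing that cylindrical slice by the compact part of the endocobordism $C$ gives a second closed non-orientable Lagrangian $L'$ with $k(L')=k(L)+k(C)$, and Audin's theorem forces both $k(L)$ and $k(L')$ to be congruent to $2$ mod $4$, whence $4\mid k(C)$. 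If you want to pursue your relative congruence instead, you would in effect be reproving a boundary version of Audin's theorem, and you should expect the argument to be at least as delicate as the closed case it generalizes.
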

 
 Theorem~\ref{thm:non-exact-endos} is proved in Theorem~\ref{thm:nole} and Theorem~\ref{thm:genus4}.
 The fact  that the crosscap genus of a non-orientable Lagrangian endocobordism must be a positive multiple of $4$ follows from  
a result of Audin about the obstruction to the Euler characteristic of closed, Lagrangian submanifolds in $\rr^4$, \cite{audin}. 
It is easy to construct {\it immersed} Lagrangian endocobordisms; the existence of the desired embedded 
endocobordisms follows from Lagrangian surgery, as developed, for example, by Polterovich in \cite{polterovich:surgery}. 

 Of special interest are Lagrangian cobordisms that satisfy an additional ``exactness" condition.  Exactness is known to be
 quite restrictive: by a foundational result of Gromov, \cite{gromov:hol}, there are no closed, exact Lagrangian submanifolds in $\rr^{2n}$ with
 its standard symplectic structure.  The non-closed trivial Lagrangian cylinder $\rr \times \leg$ is exact, and Section \ref{sec:background}
 describes  some general methods to construct exact Lagrangian cobordisms.  In contrast to Theorem~\ref{thm:non-exact-endos}, 
 there are some Legendrians that do  not admit {\it exact}, non-orientable Lagrangian endocobordisms:
 
 \begin{thm} \label{thm:fill-no-endo}  There does not exist an exact, non-orientable  Lagrangian endocobordism
 for any Legendrian knot $\leg$ that is exactly orientably or non-orientably fillable.   \end{thm}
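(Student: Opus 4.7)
The plan is a proof by contradiction via iterated concatenation together with a rigidity on Euler characteristics of exact Lagrangian fillings. Suppose $\leg$ admits an exact Lagrangian filling $F$ (orientable or non-orientable), and, for contradiction, suppose $C$ is an exact non-orientable Lagrangian endocobordism of $\leg$. Since exact Lagrangian cobordisms in the symplectization concatenate while preserving exactness (recalled in Section~\ref{sec:background}), for each $n \geq 1$ one obtains an exact Lagrangian filling $F_n$ of $\leg$ by stacking $n$ copies of $C$ on top of $F$.

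The second step is an Euler characteristic count. Because the gluings all occur along the single circle $\leg$, the Euler characteristic is additive: $\chi(F_n) = \chi(F) + n\,\chi(C)$. A compact non-orientable surface with two boundary components and crosscap genus $h \geq 1$ has $\chi = -h \leq -1$, so the sequence $\chi(F_n)$ is unbounded below.

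The contradiction must come from an a priori upper bound on $-\chi(L)$ among exact Lagrangian fillings of the fixed Legendrian $\leg$. For orientable $L$, Chantraine's formula $\chi(L) = -\operatorname{tb}(\leg)$ suffices. The non-orientable case --- which $F_n$ enters for $n \geq 1$ whenever $F$ is orientable, or already at $n=0$ when $F$ is itself non-orientable --- is the main obstacle. I would address it either by invoking a Chantraine-type formula for non-orientable exact Lagrangian cobordisms (involving the relative normal Euler number of $L \subset \rr \times \rr^3$, whose magnitude is controlled by exactness), or by using the augmentation of the Chekanov--Eliashberg DGA of $\leg$ induced by $F$: its linearized invariants have fixed rank and, through Seidel-type comparison isomorphisms, bound the rank of $H_*(F_n)$ uniformly in $n$, hence bound $\chi(F_n)$.

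Once such a uniform bound on $\chi$ of exact Lagrangian fillings of $\leg$ is in place, the unbounded decay of $\chi(F_n)$ produced in the second step is impossible, yielding the desired contradiction and hence the nonexistence of an exact non-orientable endocobordism for any exactly fillable $\leg$.
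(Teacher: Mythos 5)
Your second suggested route is essentially the paper's proof, and it is the one that works; but as written it has a gap at exactly the point you flag as the main obstacle. The Seidel isomorphism (Theorem~\ref{thm:seidel}) equates $\dim H(F_n;\mathbb Z/2)$ with $\dim LCH(\leg,\varepsilon_{F_n};\mathbb Z/2)$ only for the augmentation $\varepsilon_{F_n}$ \emph{induced by} $F_n$, and a priori this augmentation varies with $n$; so the claim that ``its linearized invariants have fixed rank and \dots bound the rank of $H_*(F_n)$ uniformly in $n$'' is not yet justified. The missing ingredient is the elementary observation that $\mathcal A(\leg)$ has finitely many generators (Reeb chords), hence only finitely many $\mathbb Z/2$-valued augmentations. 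The paper concludes by pigeonhole: among the infinitely many topologically distinct exact fillings obtained by stacking, two must induce the \emph{same} augmentation, and the ungraded Seidel isomorphism then forces them to have equal total Betti number, a contradiction. (Equivalently, finiteness of the augmentation set is precisely what gives the uniform bound on $\dim LCH$, hence on $\dim H(F_n;\mathbb Z/2)$ and on $-\chi(F_n)$, that your argument needs.) Note also that one must use the \emph{ungraded} version of Theorem~\ref{thm:seidel}: the $F_n$ are non-orientable, and there is no reason for them to have Maslov class $0$.

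Your first suggested route --- extending Chantraine's formula $\chi(L)=-tb(\leg)$ to non-orientable exact fillings via a relative normal Euler number --- is not established in this generality and is not what the paper does; the paper only records, in a remark after Theorem~\ref{thm:genus4}, a $tb$-bookkeeping argument for cobordisms built from isotopy and surgery, not for arbitrary exact non-orientable cobordisms. Your Euler-characteristic bookkeeping ($\chi(F_n)=\chi(F)+n\,\chi(C)$, with $\chi(C)\le -1$) is correct but slightly more than is needed: all the argument requires is that the $F_n$ are pairwise topologically distinct, e.g.\ because their total Betti numbers are unbounded.
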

 
 A Legendrian knot $\leg$ is exactly fillable if there exists an 
    exact   Lagrangian cobordism that is cylindrical over $\leg$ at the positive end and does not intersect $\{T_-\} \times \rr^3$,
    for $T_- \ll 0$; precise definitions can be found in Section~\ref{sec:background}. Theorem~\ref{thm:fill-no-endo} is proved in
  Section~\ref{sec:exact-obstruct}; it follows from the Seidel Isomorphism, which relates the topology of a filling
  to the linearized contact cohomology of the Legendrian at the positive end. 
  Theorem~\ref{thm:fill-no-endo} implies that 
on the set of Legendrian knots in $\rr^3$ that are exactly fillable, orientably or not, the
 relation defined by exact, non-orientable Lagrangian cobordism is {\it anti-reflexive} and {\it anti-symmetric}, see Corollary~\ref{cor:anti-symmetry}.
 Figure~\ref{fig:fillable} gives some particular examples of Legendrians that are exactly fillable and thus do not admit exact, non-orientable Lagrangian endocobordisms.  Many of these examples are maximal $tb$ Legendrian representatives of  twist and torus knots.  In fact, using the 
  classification results of Etnyre and Honda, \cite{etnyre-honda:knots}, and  Etnyre, Ng, and V\'ertesi, \cite{env}, we  show:
    
  \begin{cor}\label{cor:twist-torus}  Let $K$ be the smooth knot type of either a  twist knot or a positive torus knot  or a negative torus
  knot  of the form $T(-p, 2k)$, for $p$ odd and $p > 2k > 0$.  Then any maximal $tb$ Legendrian representative  of $K$
 does not have an exact, non-orientable Lagrangian endocobordism.
 \end{cor}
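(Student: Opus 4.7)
The natural plan is to combine Theorem~\ref{thm:fill-no-endo} with known and constructible Lagrangian fillings. Specifically, it suffices to exhibit, for each Legendrian representative under consideration, \emph{some} exact Lagrangian filling (orientable or non-orientable), since Theorem~\ref{thm:fill-no-endo} then rules out the existence of an exact, non-orientable Lagrangian endocobordism.

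I would handle the three families separately. For a positive torus knot $T(p,q)$ with $p,q>0$, the Etnyre--Honda classification shows that every maximal-$tb$ Legendrian representative is Legendrian isotopic to the rainbow closure of a standard positive braid word; a decomposable, orientable exact Lagrangian filling can then be constructed by a sequence of Legendrian Reidemeister isotopies and $0$-resolutions (pinch moves) applied to the front diagram, using the decomposable-cobordism machinery recalled in Section~\ref{sec:background}. For a twist knot, the Etnyre--Ng--V\'ertesi classification enumerates all maximal-$tb$ Legendrian representatives up to Legendrian isotopy; for each of them one writes down an explicit front and produces an exact Lagrangian filling by pinch moves together with the standard unknot cap, yielding either an orientable filling (when the tb and rotation numbers are compatible) or a non-orientable exact filling when orientations cannot be made consistent. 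For a negative torus knot $T(-p,2k)$ with $p$ odd and $p>2k>0$, one again uses the Etnyre--Honda classification to reduce to a standard front; here the goal is a non-orientable exact Lagrangian filling, built by combining decomposable moves with non-orientable unknot caps to close off the diagram consistently with the hypothesis that one of the strand numbers is even.

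With these three constructions in hand, each maximal-$tb$ Legendrian representative in question is exactly (orientably or non-orientably) fillable, and Theorem~\ref{thm:fill-no-endo} immediately gives the non-existence of an exact, non-orientable Lagrangian endocobordism, which is the statement of the corollary.

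The main obstacle I expect is the construction for the negative torus knot family $T(-p,2k)$: unlike positive torus knots, these are not closures of positive braids, so there is no off-the-shelf decomposable orientable filling, and one must carefully design a combination of saddle resolutions and non-orientable caps that is (i) compatible with the standard front coming from the classification and (ii) produces an exact Lagrangian surface rather than merely an immersed or smooth one. The numerical hypothesis $p>2k>0$ with $p$ odd and $2k$ even is almost certainly what allows the parity bookkeeping to work out so that a non-orientable filling exists; checking this parity condition against the rotation number constraints dictated by the Etnyre--Honda normal form is the step I would expect to require the most care.
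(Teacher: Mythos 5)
Your proposal matches the paper's proof: the paper likewise invokes Theorem~\ref{thm:fill-no-endo} and reduces everything to exhibiting an exact Lagrangian filling of each maximal-$tb$ representative, using the Etnyre--Honda and Etnyre--Ng--V\'ertesi classifications to put the fronts in standard form and then performing explicit (orientable and non-orientable) surgeries down to a trivial link of maximal-$tb$ unknots. The only cosmetic difference is that in the paper the non-orientability for $T(-p,2k)$ enters through non-orientable saddle surgeries rather than ``non-orientable caps''; the final step is always the standard orientable disk filling of each maximal-$tb$ unknot.
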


 However, stabilized Legendrian knots do admit exact, non-orientable Lagrangian endocobordisms: a Legendrian knot is said
 to be stabilized if, after Legendrian isotopy, a strand contains a zig-zag as shown in Figure~\ref{fig:stab}.
 
 \begin{thm}   \label{thm:reflexive}
 For any stabilized  Legendrian knot $\leg$ and any $k \in \mathbb Z^+$, there exists an exact, non-orientable Lagrangian
 endocobordism for $\leg$ of crosscap genus $4k$.
 \end{thm}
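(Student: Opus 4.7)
The plan is to first construct, for any stabilized $\leg$, an exact non-orientable Lagrangian endocobordism $C_1$ of crosscap genus $4$; the general case then follows by concatenation. Stacking $k$ copies of $C_1$ produces an exact Lagrangian cobordism from $\leg$ to $\leg$ (exactness is preserved under vertical concatenation, with primitives matched by a trivial cylinder spacer), whose crosscap genus is additive under gluing along the connecting circle (since a circle has Euler characteristic $0$). This yields a crosscap genus $4k$ endocobordism and reduces the theorem to the base case $k=1$, which moreover realizes the minimal crosscap genus permitted by Theorem~\ref{thm:non-exact-endos}.

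For the base case, use that $\leg$ is stabilized to assume, after a Legendrian isotopy (whose trace is an exact Lagrangian cylinder), that a single zig-zag of $\leg$ lies in a small coordinate ball $B \subset \rr^3$ disjoint from the remainder of $\leg$. The cobordism $C_1$ will agree with the trivial cylinder $\rr \times \leg$ outside $\rr \times B$ and be modified only inside $\rr \times B$ by an explicit local model. The model is assembled from the standard toolkit of exact Lagrangian cobordisms recalled in Section~\ref{sec:background}: exact Lagrangian $1$-handles (pinch moves) to temporarily split the zig-zag off as a small Legendrian unknot, exact Lagrangian $0$-handles (disk caps) on such unknots, and a non-orientable Lagrangian surgery \`a la Polterovich localized at the zig-zag to contribute crosscaps. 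Arranging these handles symmetrically on the positive and negative ends of $\rr \times B$ ensures the result is an endocobordism rather than a cobordism between distinct Legendrians, and tallying Euler characteristics controls the crosscap genus: Audin's obstruction already forces the total to be a multiple of $4$, and the construction is calibrated to hit the minimal value $4$.

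The main obstacle is making the non-orientable Lagrangian surgery \emph{exact}: generic Polterovich surgeries introduce a non-trivial change in the primitive of the Liouville form and thereby destroy exactness. The role of the zig-zag is precisely to supply a short Reeb chord and a local Lagrangian disk that correct this primitive, so that the surgery can be realized by an exact compactly supported modification of the trivial cylinder. The technical heart of the argument is verifying that, for the explicit local model placed in $\rr \times B$, a primitive for the pullback of $e^t \alpha$ can be chosen to vanish on the overlap of the model with the trivial cylinder; once this is established, the cobordism $C_1$ extends smoothly and exactly across $\partial(\rr \times B)$, and the remainder of the argument is a routine count of Euler characteristics and an application of concatenation.
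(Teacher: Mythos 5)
Your reduction to the case $k=1$ by stacking (Lemma~\ref{lem:gluing}) is fine and matches the paper, but your base case has a genuine gap: it rests entirely on the claim that a non-orientable Lagrangian surgery ``\`a la Polterovich'' can be made \emph{exact} by using the zig-zag to correct the primitive of $e^t\alpha$, and you explicitly defer this as ``the technical heart'' without carrying it out. That is precisely the step that cannot be waved through: Polterovich surgery at a double point is generically non-exact, and in this paper it is used only for the non-exact construction of Theorem~\ref{thm:nole}. For the exact statement, the paper does not exactify a Polterovich surgery at all; it uses the already-exact non-orientable tangle surgery of Lemma~\ref{lem:surgery} (replacement of a like-oriented $0$-tangle by an $\infty$-tangle in the front), whose exactness is a cited input, not something established via the zig-zag. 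Your proposal therefore substitutes an unproved analytic claim for the place where the actual combinatorial work happens.

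That actual work is Lemma~\ref{lem:stab-destab}: via isotopy and tangle surgeries one gets (i) a crosscap genus $2$ exact cobordism from $\leg$ to $S_-S_+(\leg)$ and (ii) a crosscap genus $1$ exact cobordism from $S_-S_+(\leg)$ to $S_\pm(\leg)$. The role of the hypothesis that $\leg$ is stabilized is then not to repair a primitive but to close the loop: writing $\leg = S_-(\widehat\leg)$, one chains $\leg \to S_-S_+(\leg) \to S_+(\leg) = S_+S_-(\widehat\leg) \to S_-(\widehat\leg)=\leg$, with crosscap genera $2+1+1=4$. Note that every step here is a cobordism in the allowed direction (from $\leg_+$ to $\leg_-$); your plan to arrange handles ``symmetrically on the positive and negative ends'' suggests undoing a surgery by reversing it, which is not available for exact Lagrangian cobordisms. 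If you replace your local Polterovich model by the exact tangle surgeries of Lemma~\ref{lem:surgery} and prove the two front-diagram moves of Lemma~\ref{lem:stab-destab}, your outline becomes the paper's proof.
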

 
 Some Legendrian knots are neither exactly fillable nor stabilized.    Thus, a natural quetion is:
 
 \begin{ques}\label{ques:m(819)}  If a Legendrian knot is not exactly fillable and is not stabilized, does it have an exact, non-orientable Lagrangian
 endocobordism?  In particular, does the Legendrian representative of $m(8_{19})=T(-4,3)$ with maximal $tb$ shown in Figure~\ref{fig:m819} have an exact,
 non-orientable Lagrangian endocobordism?  
  \end{ques} 
 \noindent
The max $tb$ version of $m(8_{19})$ is not exactly fillable since the upper bound on  
the $tb$ invariant for all Legendrian representatives of $m(8_{19})$ 
given by the Kauffman polynomial is not sharp; Section~\ref{sec:questions} for more details and related questions.    
 
\begin{figure}
 \centerline{\includegraphics[height=.6in]{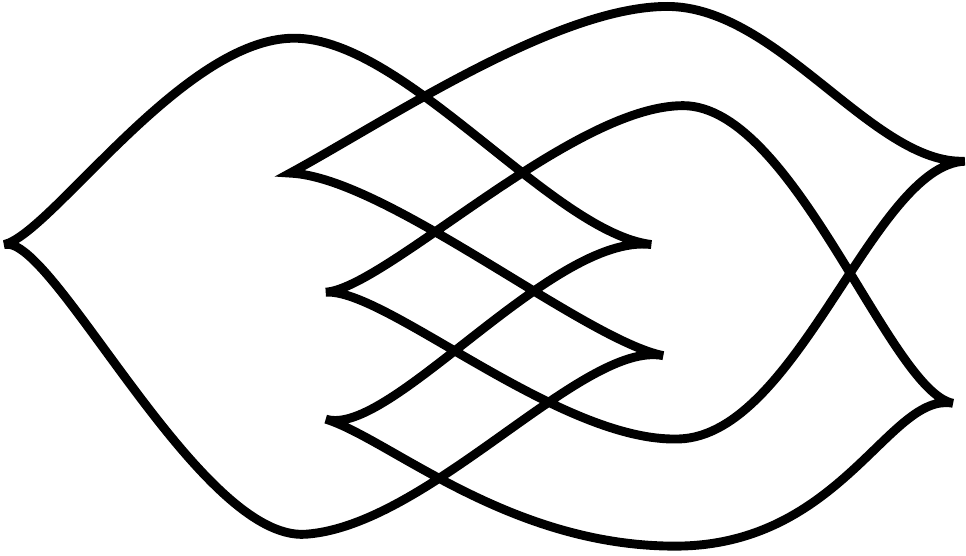}}
  \caption{Does the max $tb$ Legendrian representative of $m(8_{19})$
   have an exact, non-orientable
  Lagrangian endocobordism?}
   \label{fig:m819}
\end{figure}

Given the existence of exact, non-orientable Lagrangian endocobordisms for a stabilized Legendrian, it is natural to ask:  
   What Legendrian knots can appear as a ``slice" of such an endocobordism?  
   The parallel question for orientable Lagrangian endocobordisms has been studied in
\cite{chantraine:non-symm, bs:monopole, cns:obstr-concordance}. The non-orientable version of this question is closely tied to the question
   of whether non-orientable Lagrangian cobordisms define an equivalence relation on the set of Legendrian knots.
 By a result of Chantraine, \cite{chantraine},  it is known that the relation defined on the set of Legendrian knots by {\it orientable} Lagrangian cobordism is {\it not} an equivalence relation since
 symmetry fails.  In fact, the relation defined on the set of {\it stabilized} Legendrian knots by exact, non-orientable Lagrangian cobordism is symmetric:  see Theorem~\ref{thm:different-knots-path}. It is then easy to deduce:

 \begin{thm}\label{thm:equiv}  On the set of  stabilized Legendrian knots, the relation defined by exact, non-orientable Lagrangian cobordism is an equivalence relation.  Moreover, all stabilized Legendrian knots are equivalent with respect to this relation.
 \end{thm}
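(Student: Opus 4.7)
The plan is to deduce Theorem~\ref{thm:equiv} from the previously stated results together with a standard concatenation argument. The work splits into verifying the three axioms of an equivalence relation and then observing that there is only one equivalence class; all of the substantive content is absorbed into Theorem~\ref{thm:reflexive} and the forward-referenced Theorem~\ref{thm:different-knots-path}.

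First, for \emph{reflexivity}, any stabilized Legendrian knot $\leg$ admits an exact, non-orientable Lagrangian endocobordism by Theorem~\ref{thm:reflexive} (take, for instance, $k=1$ for crosscap genus $4$), so $\leg \sim \leg$. For \emph{symmetry}, I would invoke Theorem~\ref{thm:different-knots-path} directly: this is precisely the statement that, on stabilized Legendrians, the existence of an exact, non-orientable Lagrangian cobordism from $\leg_-$ to $\leg_+$ implies the existence of one from $\leg_+$ to $\leg_-$.

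For \emph{transitivity}, suppose $L_{12}$ is an exact, non-orientable Lagrangian cobordism from $\leg_1$ to $\leg_2$ and $L_{23}$ is an exact, non-orientable Lagrangian cobordism from $\leg_2$ to $\leg_3$. After translating $L_{23}$ in the $\rr$-direction so that its negative cylindrical end lies above the positive cylindrical end of $L_{12}$, one concatenates along the common cylindrical region $\rr \times \leg_2$. The resulting surface $L_{13}$ is smooth, Lagrangian, and cylindrical at both ends. Exactness is preserved because a primitive for $e^t\alpha$ on $L_{12}$ can be shifted by an additive constant so as to agree with one on $L_{23}$ on the overlap, a standard construction. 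Non-orientability is automatic, since gluing a non-orientable surface to any surface along an embedded circle produces a non-orientable surface. Hence $\leg_1 \sim \leg_3$.

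Finally, to show that all stabilized Legendrian knots lie in a single equivalence class, I would appeal once more to Theorem~\ref{thm:different-knots-path}, whose underlying construction produces exact, non-orientable Lagrangian cobordisms between arbitrary pairs of stabilized Legendrian knots; this simultaneously supplies the symmetry used above and, tautologically, the desired single-class statement. The only potential obstacle in the formal deduction is verifying that exactness survives concatenation, but this is a routine primitive-matching argument on the cylindrical overlap; the genuine difficulty lies entirely in Theorem~\ref{thm:different-knots-path}.
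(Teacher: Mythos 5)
Your proposal is correct and follows the paper's proof essentially verbatim: reflexivity from Theorem~\ref{thm:reflexive}, symmetry and the single-equivalence-class statement from Theorem~\ref{thm:different-knots-path}, and transitivity from stacking (the paper simply cites Lemma~\ref{lem:gluing} where you spell out the concatenation and primitive-matching details). No differences of substance.
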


 \subsection*{Acknowledgements}   We thank Baptiste Chantraine, Richard Hind, and Josh Sabloff for stimulating discussions. We also
 thank  Georgios  Dimitroglou Rizell and Tobias Ekholm for helpful comments.   
 Both authors thank the Mellon-Mays Foundation for supporting the first author with a Mellon-Mays Undergraduate Fellowship; this paper 
 grew out of her  thesis project, \cite{CS:loops}.

\section{Background}\label{sec:background}

In this section, we give some basic background on Legendrian and Lagrangian submanifolds.

\subsection{Contact Manifolds and Legendrian Submanifolds}

Below is some basic background on contact manifolds and Legendrian knots.  More information can be found, for example, in
\cite{etnyre:intro} and \cite{etnyre:knot-intro}.
  
 A {\bf contact manifold} $(Y, \xi)$ is an odd-dimensional manifold together with a contact structure, which consists
 of a field of maximally non-integrable tangent hyperplanes.
The {\bf standard contact
structure} on $\mathbb R^3$ is the field  
$\xi_p = \ker \alpha_0(p)$, for  $\alpha_0(x,y,z) = dz - ydx$.  A {\bf Legendrian link} is a submanifold, $\leg$, of $\mathbb R^3$
diffeomorphic to a disjoint union of circles so that for all $p \in \leg$,
$T_p\leg \subset \xi_p$; if, in addition, $\leg$ is connected, $\leg$ is a {\bf Legendrian knot}.
It is common to examine Legendrian links from their
$xz$-projections, known as their {\bf front projections}.  A Legendrian link will generically have
an immersed front projection with semi-cubical cusps and no vertical tangents; any such projection can
be uniquely lifted to a Legendrian link using $y = dz/dx$.

Two Legendrian links $\Lambda_0$ and $ \Lambda_1$ are {\bf equivalent Legendrian links}
if there exists a $1$-parameter family of Legendrian links $\Lambda_t$ joining $\Lambda_0$ and $\Lambda_1$. 
In fact, Legendrian links $\Lambda_0, \Lambda_1$ are equivalent if and only if their front projections
are equivalent by planar isotopies that do not introduce vertical tangents  and 
the {\bf Legendrian Reidemeister moves} as  shown in Figure~\ref{fig:L-R-moves}.
\begin{figure}
  \centerline{\includegraphics[height=1.2in]{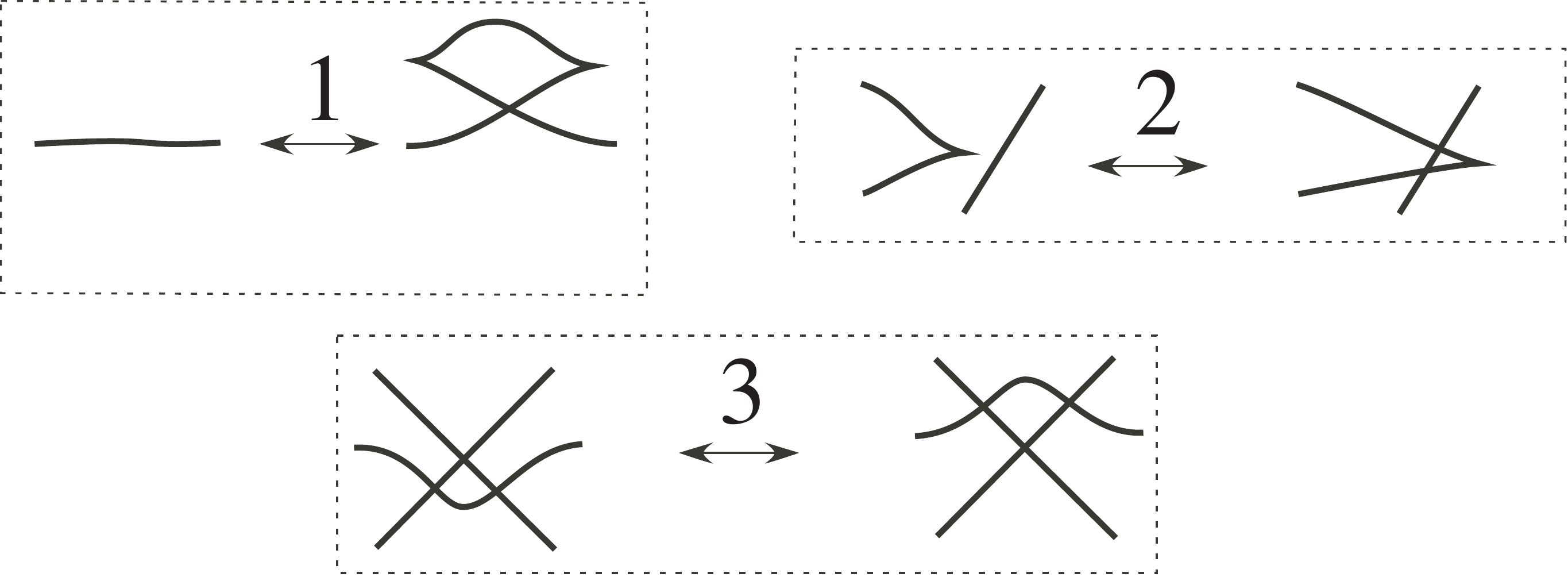}}
  \caption{The three Legendrian Reidemeister moves.
  }
  \label{fig:L-R-moves}
\end{figure}

Every Legendrian knot has a Legendrian representative.  In fact, every Legendrian knot
 has an infinite number of different Legendrian representatives.  For example, Figure~\ref{fig:3-unknots}
shows three different oriented Legendrians that are all topologically the unknot.  These unknots can be distinguished
by classical Legendrian invariant numbers, the Thurston-Bennequin, $tb$, and rotation, $r$.
These invariants can easily be computed from a front projection; see, for example, \cite{bty}.
    \begin{figure}
  \centerline{\includegraphics[height=.8in]{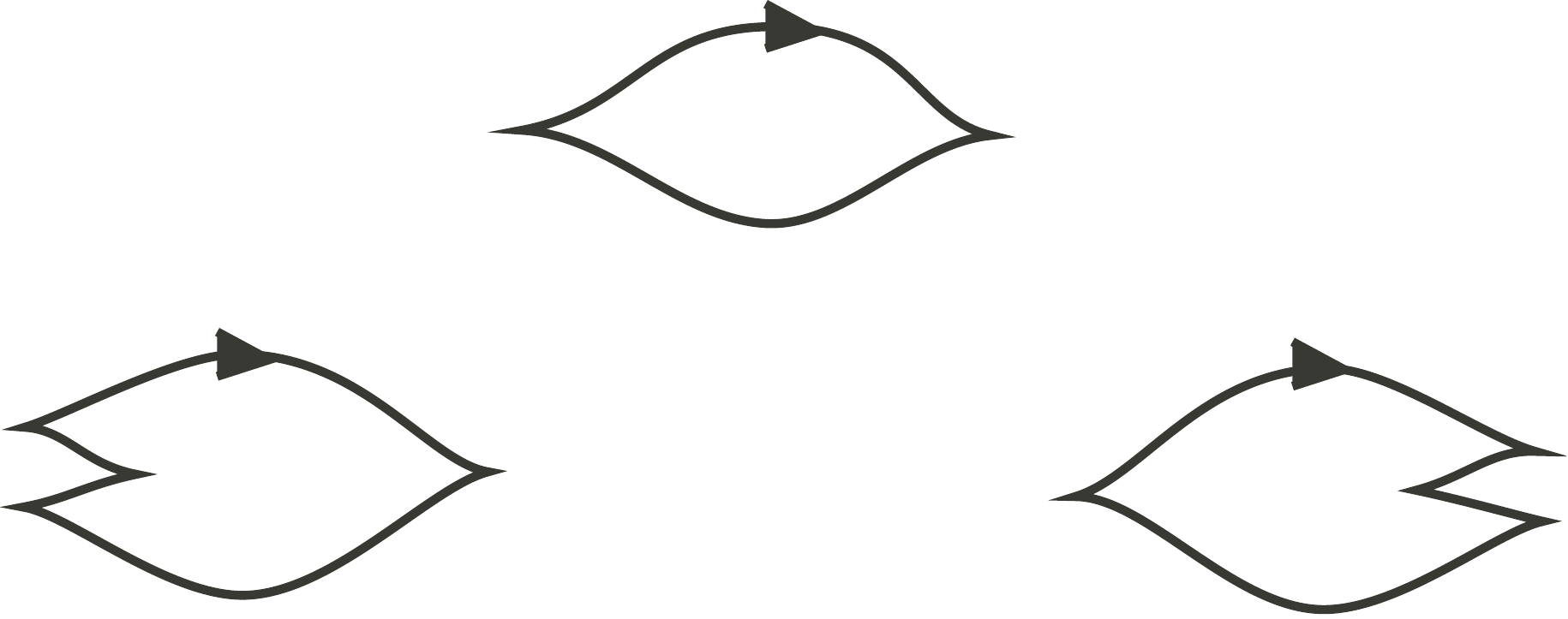}}
  \caption{Three different Legendrian unknots; the one with maximal $tb$ invariant
  of $-1$ and two others obtained by $\pm$-stabilizations. } 
  \label{fig:3-unknots}
\end{figure}

 The two unknots in  the second line of Figure~\ref{fig:3-unknots} are obtained from the one at the top
 by stabilization.   In general, from an oriented Legendrian $\leg$, one can obtain oriented 
 Legendrians $S_\pm(\leg)$:   the {\bf positive (negative) stabilization, $S_+$ ($S_-$),}  is obtained by replacing a portion of
 a strand with a strand that contains a down (up) {\bf zig-zag}, as shown in Figure~\ref{fig:stab}.  
  This stabilization procedure will not  change the underlying smooth knot type but will
 decrease the Thurston-Bennequin number by $1$; adding an up (down) zig-zag will decrease (increase)
 the rotation number by $1$.   It is possible to move a zig-zag to any strand of a Legendrian knot, \cite{f-t}.
  For any smooth knot type, all Legendrian representatives
 can be represented by a mountain range that records the possible $tb$ and $r$ values; many examples of known and conjectured mountain
 ranges can be found in the Legendrian knot atlas of Chongchitmate and Ng, \cite{ng:atlas}.

  \begin{figure}
 \centerline{\includegraphics[height= .8in]{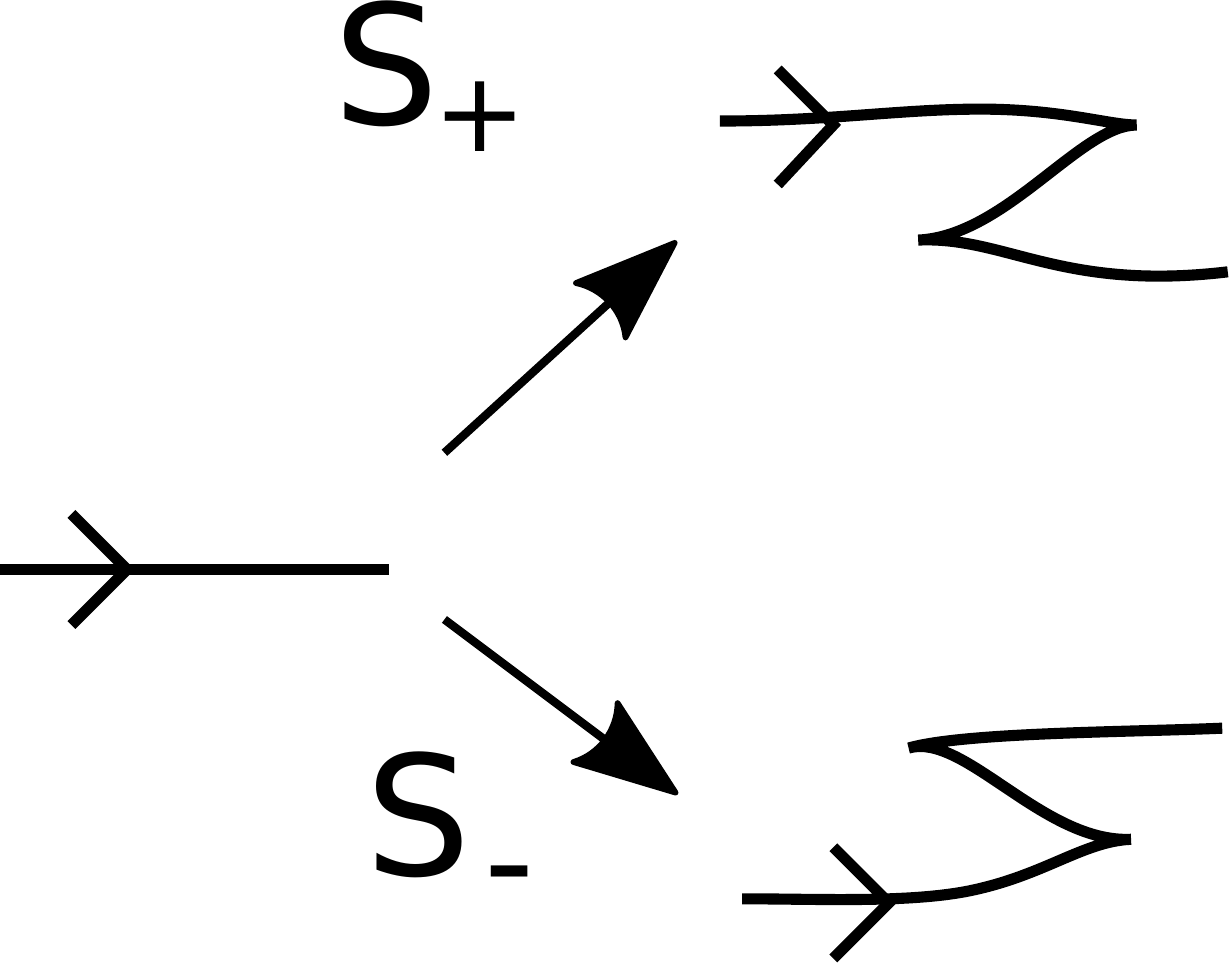}}
  \caption{The positive (negative) stabilization of an oriented knot is obtained by 
  introducing a down (an up) zig-zag.}
  \label{fig:stab}
\end{figure}

\subsection{Symplectic Manifolds, Lagrangian Submanifolds, and Lagrangian Cobordisms}\label{ssec:lagrangians}
 We will now discuss some basic concepts in symplectic geometry.  Additional background can be found, for example,
 in \cite{mcduff-salamon}.
 
 A {\bf symplectic manifold} $(M, \omega)$ is an even-dimensional manifold together with a $2$-form $\omega$
 that is closed and non-degenerate; when $\omega$ is an exact $2$-form, $(M, \omega=d\beta)$ is said to
 be an {\bf exact symplectic manifold}.  A basic example of an exact symplectic manifold is
 $(\rr^4, \omega_0 = dx_1 \wedge dy_1 + dx_2 \wedge dy_2)$.  
The cobordisms constructed in this paper live inside the symplectic manifold that is
constructed as the symplectization of $(\rr^3, \xi_0 = \ker \alpha_0)$, namely,
$\rr \times \rr^3$ with symplectic form given by $\omega = d(e^t \alpha_0)$.
In fact, the symplectization $(\rr \times \rr^3, \omega)$ is exactly symplectically equivalent to the standard
$(\rr^4, \omega_0)$, see for example \cite{bst:construct}.

A {\bf Lagrangian submanifold} $L$ of a $4$-dimensional symplectic manifold $(M, \omega)$
is a $2$-dimensional submanifold so that $\omega|_L = 0$.  When $M$ is an exact symplectic manifold,
$\omega = d\beta$, $\beta|_L$ is necessarily a closed $1$-form; when, in addition, $\beta|_L$ is
an exact $1$-form, $\beta|_ L = df$, then $L$ is said to be an {\bf exact Lagrangian submanifold}.

\begin{rem}  \label{rem:torus} There is a (non-exact) Lagrangian torus in the standard symplectic $\rr^4$: this can be seen as the product of two embedded circles
in each of the $(x_1,y_1)$ and $(x_2, y_2)$ planes.  By  classical algebraic topology, it follows 
that the torus is the only compact, orientable surface that admits a Lagrangian embedding into $\rr^4$,
\cite{audin-lalonde-polterovich}. 
\end{rem}
 
We will focus on non-compact Lagrangians that are  cylindrical over Legendrians.

\begin{defn}  Let $\leg_-, \leg_+$ be Legendrian links in   $\rr^3$.
\begin{enumerate}
\item A Lagrangian submanifold without boundary  $L \subset \rr \times \rr^3$  is  a {\bf Lagrangian cobordism from $\leg_+$ to $\leg_-$} if it is of the form
$$L = \left( (-\infty, T_-] \times \leg_- \right) \cup \overline L \cup \left( [T_+, +\infty) \times \leg_+ \right),$$
for some $T_- < T_+$, where $\overline L \subset [T_-, T_+] \times \rr^3$ is compact with boundary
$\partial \overline L =  \left( \{ T_- \} \times \leg_-  \right) \cup  \left( \{ T_+ \} \times \leg_+\right)$.
\item  A {Lagrangian cobordism from $\leg_+$ to $\leg_-$} is {\bf orientable (resp., non-orientable)}
if $L$ is orientable (resp., non-orientable).
\item  A {Lagrangian cobordism from $\leg_+$ to $\leg_-$} is {\bf exact} if $L$ is exact,  namely $e^t \alpha_0|_L = df|_L$,
and the primitive, $f$, is constant on the cylindrical ends: 
there exists constants $C_\pm$ so that
$$f|_{L \cap ((-\infty, T_-) \times \rr^3)}= C_-, \qquad f|_{L \cap ((T_+, +\infty) \times \rr^3)}= C_+.$$
 \end{enumerate}
 A Legendrian knot $\leg$ is {\bf (exactly) fillable} if there exists an (exact) Lagrangian cobordism
from $\leg_+ = \leg$ to $\leg_- = \emptyset$.
  \end{defn}

An important property of Lagrangian cobordisms is that they can be stacked/composed:

\begin{lem}[Stacking Cobordisms, \cite{ehk:leg-knot-lagr-cob}] \label{lem:gluing}If $L_{12}$ is an exact Lagrangian cobordism from $\leg_+ = \leg_1$ to $\leg_- = \leg_2$,
and $L_{23}$ is an exact Lagrangian cobordism from $\leg_+ = \leg_2$ to $\leg_- = \leg_3$, then there exists an
exact Lagrangian cobordism $L_{13}$ from $\leg_+ = \leg_1$ to $\leg_- = \leg_3$.
\end{lem}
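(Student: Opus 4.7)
The plan is to construct $L_{13}$ by overlapping the negative cylindrical end of $L_{12}$ with the positive cylindrical end of a sufficiently translated copy of $L_{23}$, and then to reconcile the two primitives on the overlap.

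First I would fix thresholds so that $L_{12}$ is cylindrical over $\leg_2$ for $t \le T_-^{(12)}$ and $L_{23}$ is cylindrical over $\leg_2$ for $t \ge T_+^{(23)}$. Choose $s > T_+^{(23)} - T_-^{(12)}$ and set $L_{23}' = \{(t+s, p) : (t,p) \in L_{23}\}$. Since vertical translation $\phi_s(t,p) = (t+s,p)$ satisfies $\phi_s^* (e^t\alpha_0) = e^s\, e^t\alpha_0$, it preserves the Lagrangian condition, and $L_{23}'$ is cylindrical over $\leg_2$ for $t \ge s + T_+^{(23)}$ and over $\leg_3$ for $t \le s + T_-^{(23)}$. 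Pick $t_0$ in the interval on which both $L_{12}$ and $L_{23}'$ are literally equal to $\rr \times \leg_2$, and define
\[
L_{13} \;=\; \bigl(L_{12} \cap \{t \ge t_0\}\bigr) \;\cup\; \bigl(L_{23}' \cap \{t \le t_0\}\bigr).
\]
Because both pieces coincide as the cylinder over $\leg_2$ in a neighborhood of $t = t_0$, the union is a smoothly embedded Lagrangian submanifold that is cylindrical over $\leg_1$ for $t$ sufficiently positive and over $\leg_3$ for $t$ sufficiently negative.

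The delicate point, and the step I expect to be the main obstacle, is exactness: the primitives must be made to agree on the overlap. Let $f_{12}$ be a primitive of $e^t\alpha_0|_{L_{12}}$ with $f_{12} \equiv C_-^{(12)}$ on the negative end, and let $f_{23}$ be a primitive of $e^t\alpha_0|_{L_{23}}$ with $f_{23} \equiv C_+^{(23)}$ on the positive end. Define $\tilde f$ on $L_{23}'$ by $\tilde f(t,p) = e^s f_{23}(t-s, p)$; using the transformation rule above, $\tilde f$ is a primitive of $e^t\alpha_0|_{L_{23}'}$, and it equals the constant $e^s C_+^{(23)}$ on the positive cylindrical end of $L_{23}'$. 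Adding the constant $C_-^{(12)} - e^s C_+^{(23)}$ produces a primitive $\tilde f'$ that equals $C_-^{(12)}$ there. On the overlap cylinder $\rr \times \leg_2$, the one-form $e^t\alpha_0$ vanishes, so any primitive is locally constant; thus $f_{12}$ and $\tilde f'$ both equal $C_-^{(12)}$ on this piece and can be glued to a smooth primitive $f_{13}$ on $L_{13}$. By construction $f_{13}$ is constant on each of the two new cylindrical ends, so $L_{13}$ is an exact Lagrangian cobordism from $\leg_1$ to $\leg_3$, as desired.
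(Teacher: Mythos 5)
The paper does not actually prove this lemma; it is quoted from \cite{ehk:leg-knot-lagr-cob}, and your concatenation-of-cylindrical-ends argument is the standard proof, with the exactness bookkeeping (rescaling the primitive by $e^s$ under translation and then adjusting by a constant so both primitives equal $C_-^{(12)}$ on the overlap) handled correctly. The one slip is the direction of the shift: for the positive cylindrical end of the translated $L_{23}$ to overlap the negative cylindrical end of $L_{12}$ you need $s + T_+^{(23)} < T_-^{(12)}$, i.e.\ $s < T_-^{(12)} - T_+^{(23)}$ (typically a large \emph{negative} $s$), whereas with your condition $s > T_+^{(23)} - T_-^{(12)}$ the interval on which both pieces are literally $\rr \times \leg_2$ is empty and no $t_0$ exists. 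With that inequality reversed (or, equivalently, translating $L_{12}$ upward instead), the argument is complete.
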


Constructions of exact Lagrangian cobordisms are an active area of research.  In this paper, we will use the
fact that there exist 
exact Lagrangian cobordisms between Legendrians related by isotopy and surgery.  The existence of
exact Lagrangian cobordisms from isotopy is well-known, 
see, for example,  \cite{eg:finite-dim}, \cite{chantraine},  \cite{ehk:leg-knot-lagr-cob}, and \cite{bst:construct}.   
 
\begin{lem}[Exact Cobordisms from Isotopy] \label{lem:isotopy} Suppose that $\leg$ and $\leg'$ are isotopic Legendrian knots.  Then there exists an exact, orientable Lagrangian cobordism
from $\leg_+ = \leg$ to $\leg_- = \leg'$.
\end{lem}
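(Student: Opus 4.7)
The plan is to build the cobordism as the trace, in the symplectization, of an ambient contact isotopy realizing the given Legendrian isotopy, reparametrized in the symplectization coordinate so that the resulting submanifold is cylindrical outside a compact interval.

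First, using the Legendrian isotopy extension theorem, I would extend the given isotopy $\{\Lambda_s\}_{s \in [0,1]}$ with $\Lambda_0 = \Lambda'$ and $\Lambda_1 = \Lambda$ to a compactly supported contact isotopy $\{\phi_s\}$ of $(\mathbb{R}^3, \ker \alpha_0)$, generated by a compactly supported contact Hamiltonian $h_s : \mathbb{R}^3 \to \mathbb{R}$. After reparametrizing $s$ by a smooth function constant near $0$ and $1$, I may assume $h_s \equiv 0$ for $s$ near both endpoints. Let $g_s$ be the conformal factor defined by $\phi_s^*\alpha_0 = g_s\, \alpha_0$.

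Next, I would choose a smooth nondecreasing $\rho : \mathbb{R} \to [0,1]$ with $\rho \equiv 0$ for $t \leq T_-$ and $\rho \equiv 1$ for $t \geq T_+$, and define the parametrization
\[
\Psi : \mathbb{R} \times \Lambda' \longrightarrow \mathbb{R} \times \mathbb{R}^3, \qquad \Psi(t, p) = \bigl(\, t - \log g_{\rho(t)}(p),\ \phi_{\rho(t)}(p)\, \bigr).
\]
This is the restriction of the canonical lift of $\phi_{\rho(t)}$ to a symplectomorphism of $(\mathbb{R} \times \mathbb{R}^3, d(e^t\alpha_0))$. A direct pullback calculation, exploiting both $\phi_s^*\alpha_0 = g_s\alpha_0$ and $\alpha_0|_{\Lambda_s} \equiv 0$, produces an explicit primitive $f$ for $\Psi^*(e^t \alpha_0)$, and shows that $f$ is \emph{constant} on the regions $\{t \leq T_-\}$ and $\{t \geq T_+\}$ where $\rho' \equiv 0$ and $h_{\rho(t)} \equiv 0$.

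Finally, I would verify the required conditions. On the negative end $\rho(t) = 0$, so $\phi_0 = \mathrm{id}$, $g_0 \equiv 1$, and $\Psi(t, p) = (t, p)$, giving exactly $(-\infty, T_-] \times \Lambda'$. On the positive end $\rho(t) = 1$, so $\Psi(t, p) = (t - \log g_1(p), \phi_1(p))$; after a standard additional adjustment of the extension arranging $g_1 \equiv 1$ along $\Lambda'$ (possible, for instance, by composing with a Reeb-flow correction supported near $\Lambda$), this becomes $(t, \phi_1(p))$, giving $[T_+, +\infty) \times \Lambda$. Exactness with constant primitives on the ends and embeddedness of $\Psi$ then follow from the previous step, and orientability is immediate since $L$ is diffeomorphic to the cylinder $\mathbb{R} \times \Lambda'$. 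The main technical obstacle is the explicit pullback calculation verifying the Lagrangian and exactness properties of $\Psi$; this is cleanest executed in the Hamiltonian-suspension formalism for symplectizations rather than in explicit $(x, y, z)$ coordinates.
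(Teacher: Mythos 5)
There is a genuine gap: the surface you define is not Lagrangian. Write $u$ for a coordinate on $\Lambda'$, let $X_s$ be the contact vector field generating $\phi_s$, and let $h_s=\alpha_0(X_s)$ be its contact Hamiltonian. Pulling back $e^t\alpha_0$ by $\Psi(t,u)=\bigl(\tau(t,u),\phi_{\rho(t)}(p(u))\bigr)$ with $\tau=t-\log g_{\rho(t)}(p)$ gives
\[
\Psi^*(e^t\alpha_0)\;=\;e^{\tau}\,\alpha_0\bigl(d\phi_{\rho(t)}(p_u)\bigr)\,du\;+\;e^{\tau}\,\rho'(t)\,h_{\rho(t)}\bigl(\phi_{\rho(t)}(p)\bigr)\,dt .
\]
The first term vanishes because $d\phi_{\rho(t)}(p_u)$ is tangent to the Legendrian $\Lambda_{\rho(t)}$, but the second does not, and it is not even closed: its exterior derivative has a $du\wedge dt$ component whenever $h_{\rho(t)}\circ\phi_{\rho(t)}$ is nonconstant along $\Lambda'$, which is the generic situation (the restriction of $h_s$ to $\Lambda_s$ is the Reeb component of the velocity of the isotopy, and it cannot vanish identically unless the isotopy is trivial). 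So $\Psi^*(d(e^t\alpha_0))\neq 0$ and the claimed ``explicit primitive $f$'' does not exist. The conformal-factor shift $t\mapsto t-\log g_{\rho(t)}(p)$ only makes each \emph{time-slice} map an exact symplectomorphism of the symplectization; it does nothing to repair the failure of the \emph{suspension} to be Lagrangian. This is exactly the phenomenon the paper flags in Remark~\ref{rem:isotopy}: the trace of a Legendrian isotopy is in general not a Lagrangian cobordism.

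The missing idea is a correction in the \emph{Reeb} direction rather than in the symplectization coordinate: one must displace the point $\phi_{\rho(t)}(p)$ by the contact Hamiltonian, i.e.\ replace $z$ by $z+\eta(t,u)$ with $\eta=\alpha_0\bigl(\tfrac{\partial\lambda}{\partial t}\bigr)$, as in Remark~\ref{rem:isotopy} (equivalently, the suspension of a Hamiltonian isotopy is Lagrangian only after inserting the Hamiltonian as a momentum coordinate, and in a symplectization the Reeb direction plays that role). With that correction the pullback of $e^t\alpha_0$ becomes exact with primitive constant on the ends. Your reparametrization $\rho$ is still needed, but for a different reason than the one you give: the corrected map is a priori only an exact Lagrangian \emph{immersion}, and one must slow the isotopy down (make $T$ large) so that the correction term $\eta$ is small enough to guarantee an embedding. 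Note also that you do not need the full Legendrian isotopy extension theorem; the construction can be carried out directly on a parametrization $\lambda_t(u)$ of the moving Legendrian, which avoids the separate normalization issue with $g_1$ at the positive end.
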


\begin{rem} \label{rem:isotopy} In general, the trace of a Legendrian isotopy is not  
a Lagrangian cobordism.  However it is possible to add a ``correction term" so that it will be Lagrangian.  More
precisely, 
let $\lambda_t(u) = (x(t, u), y(t, u), z(t, u))$, $t \in \mathbb R$, be a Legendrian isotopy so that  
$\frac{\partial \lambda}{\partial t} (t,u)$ has
compact support  with  $\Im \lambda_t(u) = \leg_-$ for $t \leq -T$ and  $\Im \lambda_t(u) = \leg_+$ for $t \geq T$, and  let 
$$\eta(t, u) = \alpha_0\left(\frac{\partial \lambda}{\partial t} (t, u)\right).$$  Then
$\Gamma(t, u) = (t, x(t, u), y(t, u), z(t, u) + \eta(t, u))$ is an exact Lagrangian immersion.  If $\eta(t, u)$ is
sufficiently small, which can be guaranteed by making $T$ sufficiently large, then $\Gamma(t,u)$ is an exact Lagrangian embedding.
\end{rem}

In addition, Legendrians $\leg$ and $\leg'$ that differ by ``surgery" can be connected by an exact Lagrangian cobordism. 
 The $0$-surgery operation can be viewed as a ``tangle surgery": the replacement
 of a Legendrian $0$-tangle, consisting of two strands with no crossings and no cusps,
 with a Legendrian $\infty$-tangle, consisting of two strands that each have 1 cusp and no crossings; see Figure~\ref{fig:o-no-surgery}.
 When the strands of the $0$-tangle are oppositely oriented, this is an {\bf orientable surgery}; 
 otherwise this is a {\bf non-orientable surgery}.  In addition, by an index $1$ surgery, it is known that 
 the maximal $tb$ Legendrian representative of the unknot, shown at the top of Figure~\ref{fig:3-unknots}, can be filled.

 \begin{figure}
   \centerline{\includegraphics[height=1in]{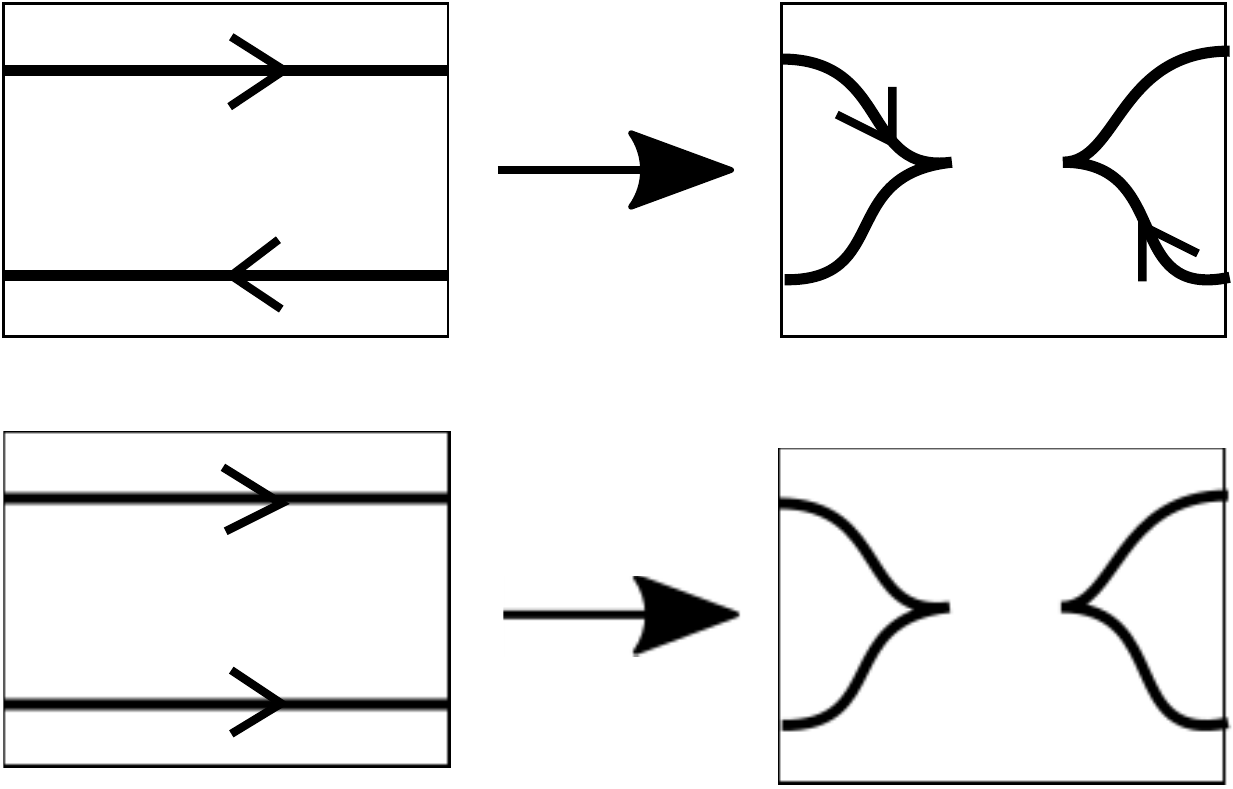}}
  \caption{Orientable and Non-Orientable Legendrian surgeries. }
  \label{fig:o-no-surgery}
\end{figure}

 \begin{lem}[Exact Cobordisms from Surgery, \cite{ehk:leg-knot-lagr-cob, rizell:surgery, bst:construct}]\label{lem:surgery}
  \begin{enumerate}
  \item Suppose that $\leg_+$ and $\leg_-$ are Legendrian knots where $\leg_-$ is obtained from $\leg_+$ by
  orientable (non-orientable) surgery, as shown in Figure~\ref{fig:o-no-surgery}.  Then there exists an exact, orientable (non-orientable)
  Lagrangian cobordism from $\leg_+$ to $\leg_-$.
  \item  Suppose $\leg_+$ is the Legendrian unknot with $tb$ equal to the maximum value of $-1$.  Then there exists
  an exact, orientable Lagrangian filling of $\leg_+$.
  \end{enumerate}
 \end{lem}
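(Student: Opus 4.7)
Both parts of the lemma are local constructions: each modifies the trivial Lagrangian cylinder $\rr \times \leg_+$ inside a small symplectic ball, replacing it with a standard local Lagrangian model and gluing back to the cylinder outside. The plan for part (1) is to reduce, via a Darboux chart around the tangle region, to an explicit model in $\rr \times T^*\rr \subset \rr \times \rr^3$. In this chart, the $0$-tangle looks like the pair of axes $\{x=0\} \cup \{y=0\}$ and the $\infty$-tangle looks like a smooth hyperbola $\{xy = c\}$ for some $c \neq 0$. I would choose a monotone function $\sigma\colon \rr \to \rr$ with $\sigma(t) = 0$ for $t \gg 0$ and $\sigma(t)$ constant and non-zero for $t \ll 0$, and consider the surface
\[
\overline L_{\mathrm{loc}} = \{(t,x,y) : xy = \sigma(t)\}.
\]
A direct computation shows $\overline L_{\mathrm{loc}}$ is Lagrangian for $d(e^t(y\,dx))$ and that the primitive $e^t \int y\,dx$ restricted to $\overline L_{\mathrm{loc}}$ is constant on each cylindrical end, so the piece is exact.

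The orientability is then a combinatorial question about the incoming tangle. If the two strands of the $0$-tangle carry opposite orientations, the two sheets of $\{xy = \sigma(t)\}$ fit into a coherent orientation and $\overline L_{\mathrm{loc}}$ is an annulus; this produces the orientable saddle. If instead the two strands are oriented the same way, no coherent orientation extends across the saddle and $\overline L_{\mathrm{loc}}$ becomes a M\"obius band, giving the non-orientable surgery cobordism. Outside the Darboux chart, a standard cutoff argument interpolates between $\overline L_{\mathrm{loc}}$ and the trivial cylinders $\rr \times \leg_\pm$, with the primitive of $e^t \alpha_0$ arranged to be locally constant on the cylindrical ends, so that exactness in the sense of the definition in Section~\ref{ssec:lagrangians} is preserved.

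For part (2), the plan is to exhibit an explicit Lagrangian disk in a Darboux ball whose Legendrian boundary at the positive end is the max-$tb$ unknot. Using the symplectic equivalence between $(\rr \times \rr^3, d(e^t \alpha_0))$ and $(\rr^4, \omega_0)$ mentioned in Section~\ref{ssec:lagrangians}, one can realize the max-$tb$ unknot as the boundary of a flat Lagrangian disk $\{(x_1,y_1,x_2,y_2) : x_2 = y_2 = 0,\ x_1^2+y_1^2 \le 1\}$ after a compactly supported Hamiltonian isotopy matching it to the cylindrical form at the positive end. Equivalently, this is the Weinstein $0$-handle: at $t \ll 0$ the Legendrian is empty, and a local index-$0$ handle attachment produces a small max-$tb$ unknot on a cross-section, with the trace being an embedded Lagrangian disk. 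Exactness is automatic from the explicit primitive $\tfrac12(x_1\,dy_1 - y_1\,dx_1)$ restricted to the disk.

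The main obstacle, and the point where all three cited references put in substantive work, is verifying that the cutoff gluing in part (1) preserves embeddedness, the Lagrangian condition, and constancy of the primitive on both cylindrical ends simultaneously; controlling the non-orientable framing in the M\"obius case also requires care so that the inserted model genuinely attaches to the correct sides of the two strands. For the purposes of this paper the verifications in \cite{ehk:leg-knot-lagr-cob}, \cite{rizell:surgery}, and \cite{bst:construct} can be invoked as a black box, and the disk filling in part (2) follows from the same local model arguments applied at a single end.
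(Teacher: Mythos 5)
The paper does not prove Lemma~\ref{lem:surgery}; it is stated as a black-box citation of \cite{ehk:leg-knot-lagr-cob}, \cite{rizell:surgery}, and \cite{bst:construct}, so your decision to defer the substantive verification (embeddedness, exactness of the glued primitive, the framing of the handle) to those references is exactly what the paper itself does, and your overall picture --- a local Lagrangian saddle model inserted into the trivial cylinder for part (1), a Lagrangian disk/Weinstein $0$-handle for part (2), with orientability of the cobordism governed by whether the orientations of the two incoming strands extend coherently over the saddle --- is the correct structure of the cited constructions.

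That said, the explicit local model you wrote down does not work as stated, and you should not present it as a computation. First, the $0$-tangle is two \emph{disjoint} strands with no crossings (Figure~\ref{fig:o-no-surgery}), not the pair of axes $\{xy=0\}$; the singular cross is the intermediate slice of the saddle, not either end. Second, with $\sigma(t)\equiv 0$ on an end, the surface $\{(t,x,y): xy=\sigma(t)\}$ is singular along the entire ray $\{x=y=0\}$ in that end (the defining function has vanishing differential there), so the model is not an embedded surface, let alone a Lagrangian cobordism cylindrical over an embedded tangle. The smooth version of this picture (passing from $xy=c$ to $xy=-c$) degenerates at a single instant, which is precisely why the references must work harder: they use Legendrian ambient surgery, generating-family models, or a decomposition into elementary moves rather than the naive trace of the front movie. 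Third, the local saddle piece is a disk in either case; ``annulus versus M\"obius band'' is not a property of the local piece but of how its boundary identifications interact with an orientation of the ambient cylinder, which is the point of the orientation condition on the $0$-tangle. Since you invoke the references for all of these verifications anyway, the proposal is acceptable as a citation-plus-heuristic, but the heuristic formulas themselves are not correct and would need to be replaced by the actual models from \cite{ehk:leg-knot-lagr-cob} or \cite{bst:construct} if you wanted a self-contained proof.
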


\begin{rem} \label{rem:isotopy+surgery} By Lemmas
~\ref{lem:isotopy} and \ref{lem:surgery}, to show there exists an exact Lagrangian cobordism from $\leg_+$ to $\leg_-$, 
 it suffices to show that there is a string of Legendrian
links $(\leg_+ = \leg_0 , \leg_1 , \dots, \leg_n = \leg_-)$, 
where each $\leg_{i+1}$ is obtained from $\leg_i$ by a single surgery, as shown in Figure~\ref{fig:o-no-surgery}, 
and Legendrian isotopy.  In the case where each surgery is orientable, the exact Lagrangian cobordism
 will be orientable; in this case, the length $n$ of this string must be even and will agree
with twice the genus of the Lagrangian cobordism; for more details, see \cite{bty}. If there is at least one non-orientable surgery, the 
exact Lagrangian cobordism will be non-orientable and the length of the string agrees with the crosscap genus of the Lagrangian cobordism. 
To construct an exact Lagrangian filling of $\leg_+$, it suffices to construct such a string to $\leg_- = U$,
where $U$ is a trivial link of maximal $tb$ Legendrian unknots. 
\end{rem}

 \section{Constructions of Non-orientable Lagrangian Endocobordisms}

 In this section, we show that {\it any} Legendrian knot has a non-orientable Lagrangian endocobordism with
 crosscap genus an arbitrary multiple of $4$.  We then show that it is not possible to get any other crosscap genera.

The strategy to show existence is to first construct an immersed orientable Lagrangian cobordism, and then
apply ``Lagrangian surgery" to modify it so that it is embedded.  The following
 description of Lagrangian surgery follows Polterovich's construction, \cite{polterovich:surgery};
see also work of Lalonde and Sikorav, \cite{lalonde-sikorav}. 

To state Lagrangian surgery precisely,   we first
need to explain the ``index" of a double point.   Suppose that $x$ is a point of self-intersection of a generic, immersed, oriented $2$-dimensional submanifold
$L$ of $\rr^4$.  Then  $ind(x) \in \{ \pm 1 \}$ will denote the {\bf index of self-intersection of $L$} at $x$:  let $(v_1, v_2)$ and
$(w_1, w_2)$ be positively oriented bases of the transverse tangent spaces at $x$, then 
$$\ind(x) = +1  \iff (v_1, v_2, w_1, w_2) \text{ is a positively  oriented basis of } \rr^4,$$
and otherwise $\ind(x) = -1$.

By constructing a Lagrangian handle in a Darboux chart, it is possible to remove double points of a Lagrangian:

\begin{lem}[Lagrangian Surgery,  \cite{polterovich:surgery}] \label{lem:lag-surgery}
Let $\Sigma$ be a 2-dimensional manifold.  Suppose $\phi: \Sigma \to \rr^4$ is a
Lagrangian immersion, and $U \subset \rr^4$ contains a single transversal double point  $x$ of $\phi$.
Then there exists a 2-dimensional manifold $\Sigma'$, which is obtained by a  
Morse surgery
on $\Sigma$, and a Lagrangian immersion $\phi': \Sigma' \to \rr^4$
so that 
\begin{enumerate}
\item  $\Im \phi = \Im \phi'$ on $\rr^4 - U$;
\item  $\phi'$ has no double points in $U$.
\end{enumerate}
Furthermore, let $\phi^{-1}(\{ x \}) = \{ p_1, p_2 \} \subset \Sigma$.  Then
\begin{enumerate}
\item  if $p_1, p_2$ are in disjoint components of $\Sigma$, then $\Sigma'$ is obtained
from $\Sigma$ by a connect sum operation;
\item if $p_1, p_2$ are in the same component of $\Sigma$ then:
\begin{enumerate}
\item if $\Sigma$ is not oriented, $\Sigma' = \Sigma \# K$,
\item if  $\Sigma$ is oriented, then $\Sigma' = \Sigma \# T$, when
$\ind(x) = +1$, and $\Sigma' = \Sigma \# K$, when $\ind(x) = -1$,
\end{enumerate}
where $K$ denotes the Klein bottle, and $T$ denotes the torus.
\end{enumerate}
\end{lem}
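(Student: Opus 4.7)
The plan is to realize Polterovich's local construction: reduce to a standard symplectic model near $x$, insert an explicit Lagrangian handle, and then read off the resulting surface together with its orientation data.

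First, I would normalize a neighborhood of $x$. By the Weinstein Lagrangian neighborhood theorem applied to a pair of transverse Lagrangian planes meeting at a point, there is a symplectomorphism from $U$ to a neighborhood of $0 \in (\mathbb{C}^2, \omega_0)$ that carries the two local sheets of $\phi$ through $x$ to the standard transverse Lagrangian planes $P_1 = \mathbb{R}^2 \times \{0\}$ and $P_2 = \{0\} \times i\mathbb{R}^2$. Since the modification is required only inside $U$, it suffices to perform the surgery in this chart.

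Second, I would introduce Polterovich's local Lagrangian handle. Using a smooth embedded curve $\gamma$ in $\mathbb{C}\setminus\{0\}$ that interpolates between a segment of the real axis and a segment of the imaginary axis, together with an $S^1$-action on $\mathbb{C}^2$ that preserves $\omega_0$, one builds an embedded Lagrangian cylinder $H_\gamma \subset \mathbb{C}^2$ whose two ends agree with small annular neighborhoods of circles in $P_1$ and $P_2$. A short computation (the $ds\wedge d\theta$ coefficients coming from the two factors cancel) confirms that $H_\gamma$ is Lagrangian. Removing the two small disks those annuli bound and gluing in $H_\gamma$ produces a new surface $\Sigma'$ and a Lagrangian immersion $\phi' : \Sigma' \to \rr^4$ that agrees with $\phi$ outside $U$ and has no double points inside $U$.

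Third, the topology follows from the standard observation that removing two disjoint open disks from a surface and gluing in a cylinder is a Morse $1$-surgery. If $p_1$ and $p_2$ lie in different components of $\Sigma$, the operation is by definition a connect sum, yielding conclusion (1). If they lie in the same component, a $1$-surgery attaches either a handle (producing $\Sigma \# T$) or a crosscap pair (producing $\Sigma \# K$).

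The main obstacle, and the step deserving the most careful treatment, is distinguishing these two outcomes in the same-component case. I would track the canonical orientation that $H_\gamma$ carries from its $(s,\theta)$-parametrization and compare it with the orientations that the two attaching circles inherit from chosen positive frames $(v_1,v_2)$ and $(w_1,w_2)$ of the two sheets at $x$. A direct linear-algebra computation in $\mathbb{C}^2$ shows that these boundary orientations match along both attaching circles exactly when $(v_1,v_2,w_1,w_2)$ is a positively oriented basis of $\rr^4$, i.e.\ $\ind(x) = +1$; in that case $\Sigma'$ inherits a global orientation and the attachment is an ordinary handle, so $\Sigma' = \Sigma \# T$. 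When $\ind(x) = -1$ the boundary orientations disagree, a crosscap is forced, and $\Sigma' = \Sigma \# K$. For the non-orientable case (2)(a), the classical identity $N \# T \cong N \# K$ valid for any non-orientable surface $N$ shows the result is $\Sigma \# K$ regardless of the local index, completing the classification.
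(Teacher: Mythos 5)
This lemma is quoted in the paper directly from Polterovich \cite{polterovich:surgery} (see also \cite{lalonde-sikorav}); the paper itself gives no proof, so there is no internal argument to compare against. Your sketch is a faithful reconstruction of Polterovich's construction and is essentially correct: normalize the two sheets near $x$ to the standard transverse pair $(\rr^2, i\rr^2)$ in $\mathbb{C}^2$, sweep the curve $\gamma$ by the real rotation action to obtain the Lagrangian cylinder $H_\gamma$ (your cancellation of the $ds\wedge d\theta$ terms is exactly the computation showing $\omega_0$ pulls back to zero), excise the two disks and glue in the handle, and then settle the torus-versus-Klein-bottle dichotomy by an orientation count, with Dyck's identity $N\# T \cong N \# K$ disposing of the non-orientable case. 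Two small points deserve more care than you give them. First, the normalization is not literally an application of the Weinstein Lagrangian neighborhood theorem, which concerns a single Lagrangian; for a transversally intersecting pair one uses the linear fact that $\omega$ gives a perfect pairing between the two tangent planes at $x$ (producing a symplectic basis adapted to the pair) followed by a Moser-type argument to flatten both sheets simultaneously. Second, in the orientation count one should note that $\ind(x)$ is preserved under the normalizing symplectomorphism (symplectomorphisms of $\rr^4$ preserve the symplectic orientation), and that the residual freedom in the normalization is the diagonal $GL(2,\rr)$-action, which either preserves or reverses the orientations of \emph{both} planes at once; this is precisely why the single sign $\ind(x)$ determines whether the handle can be attached compatibly with the orientations on both attaching circles. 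With those points made explicit, your argument is complete.
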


 We now have the necessary background to show the existence of a non-orientable Lagrangian
 endocobordism for any Legendrian knot:
 
 \begin{thm}\label{thm:nole}  For any Legendrian knot $\leg$ and any $k \in \mathbb Z^+$, there exists a 
 non-orientable Lagrangian endocobordism for $\leg$  
 of crosscap genus $4k$.
 \end{thm}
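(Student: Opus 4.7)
The plan is to construct, for each $k \in \mathbb{Z}^+$, an immersed orientable Lagrangian endocobordism carrying exactly $2k$ transverse self-intersections, apply Lagrangian surgery (Lemma~\ref{lem:lag-surgery}) to resolve them, and then invoke Chantraine's theorem to identify the resulting embedded endocobordism as non-orientable of crosscap genus $4k$.

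First, I start from the embedded Lagrangian cylinder $\mathbb{R}\times\leg$, which has Euler characteristic $0$. In $2k$ pairwise disjoint Darboux balls in $\mathbb{R}\times\mathbb{R}^3$ I place $2k$ Lagrangian Whitney spheres $W_1,\ldots,W_{2k}$---each a Lagrangian immersion of $S^2$ into $\mathbb{R}^4$ with exactly one transverse self-intersection. By small Hamiltonian perturbations I arrange a single transverse intersection between each $W_i$ and the current Lagrangian surface, and then apply Lagrangian surgery at each of these $2k$ tubing intersections. By part~(1) of Lemma~\ref{lem:lag-surgery}, each such surgery is a connect sum $\Sigma\mapsto\Sigma\#S^2=\Sigma$, so the cylinder's underlying smooth topology is unaffected; the output is a connected, oriented, immersed Lagrangian endocobordism $\tilde L$ for $\leg$ with $\chi(\tilde L)=0$ and exactly $2k$ self-intersections, one inherited from each original Whitney sphere.

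Next I apply Lagrangian surgery (via part~(2) of Lemma~\ref{lem:lag-surgery}) at each of the $2k$ remaining self-intersections of $\tilde L$. Each such surgery contributes either a $\#T$ or a $\#K$ summand and decreases the Euler characteristic by $2$, so the resulting embedded Lagrangian endocobordism $\overline L$ satisfies $\chi(\overline L)=-4k$. To identify the topology of $\overline L$, observe that $\chi(\overline L)<0$ forces positive (ordinary or crosscap) genus; since Chantraine's theorem rules out any embedded orientable Lagrangian endocobordism for $\leg$ of positive genus, $\overline L$ must be non-orientable. A non-orientable endocobordism with two $S^1$-ends and crosscap genus $g$ has $\chi=-g$, so the crosscap genus of $\overline L$ equals $4k$.

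The main obstacle is the Lagrangian tubing step: each of the $2k$ Whitney spheres must be joined to the running surface via exactly one Lagrangian surgery at a single transverse intersection, with no stray extra intersections that would corrupt the Euler characteristic count. The payoff of setting things up this way is that one never needs to compute the index of any individual self-intersection---Chantraine's theorem converts the nonzero Euler characteristic directly into non-orientability, and the crosscap genus then reads off from $\chi$.
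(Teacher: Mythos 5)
Your overall architecture (build an immersed Lagrangian with controlled double points, resolve them with Lemma~\ref{lem:lag-surgery}, and use Chantraine's theorem at the end to force non-orientability from $\chi\neq 0$) is close in spirit to the paper's, and the Chantraine endgame is a legitimate way to avoid computing indices. But the tubing step fails for a homological parity reason. Each Whitney sphere $W_i$ is a \emph{closed} (immersed) surface, hence represents $0\in H_2(\rr\times\rr^3;\mathbb Z/2)$, while the running surface is properly immersed and closed as a subset of $\rr\times\rr^3$; the mod~$2$ count of transverse intersection points between them is the pairing of these two classes, namely $0$. So $W_i$ must meet the running surface in an \emph{even} number of transverse points, and ``a single transverse intersection'' cannot be arranged by any perturbation. (Relatedly, a genuinely \emph{small} Hamiltonian perturbation of objects placed in disjoint Darboux balls keeps them disjoint; you need a large translation to create intersections at all, after which the parity constraint applies.) With the unavoidable two intersection points per sphere the bookkeeping also changes: one of the two surgeries is a disjoint-component connect sum (no change to $\chi$) but the other is a same-component surgery (drops $\chi$ by $2$), so together with the Whitney double point each sphere contributes $-4$ to $\chi$, and you would want $k$ spheres rather than $2k$.

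This parity issue is exactly why the paper's proof uses a closed Lagrangian \emph{torus} arranged to meet the cylinder $\rr\times\leg$ in precisely two points of opposite index: the first surgery tubes the torus onto the cylinder, and the second, at the index $-1$ point, produces a Klein bottle summand, giving a crosscap genus $4$ endocobordism directly, which is then stacked to reach $4k$. Your argument can be repaired along the same lines (two intersection points per closed handle, $k$ handles), and your use of Chantraine's theorem --- which the paper states for all orientable Lagrangian endocobordisms, not just exact ones --- would then validly replace the paper's index computation at the second double point. As written, however, the construction does not produce the claimed immersed Lagrangian, so the proof has a genuine gap.
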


\begin{proof}  For an arbitrary Legendrian knot $\leg$,  begin with cylindrical Lagrangian cobordism, 
$L =  \rr \times \leg  
\subset \rr \times \rr^3$, which is a space that is symplectically equivalent to the standard $\mathbb R^4$.  
As explained in   Remark~\ref{rem:torus},
there exists  an  embedded Lagrangian torus, $T$, so that $T \cap L = \emptyset$.  After a suitable shift and perturbation,
we can assume that $L$ and $T$ intersect at exactly two points, $x_1$ and $x_2$ where $\ind(x_1) = +1$ and
$\ind(x_2) = -1$.  By Lemma~\ref{lem:surgery}, Lagrangian surgery at $x_1$ results in the connected, oriented, immersed Lagrangian
 diffeomorphic to $(\rr \times S^1) \# T$ with a double point at $x_2$ of index $-1$; a second Lagrangian surgery at $x_2$ results in a
  embedded, non-orientable Lagrangian cobordism
 diffeomorphic to $\rr \times S^1 \times T \times K$, and thus of crosscap genus $4$. 
Stacking these endocobordisms, using Lemma~\ref{lem:gluing},    
produces an embedded, non-orientable Lagrangian cobordism of crosscap genus $4k$, for any $k \in \mathbb Z^+$.
\end{proof}

 In fact, the possible  crosscap genera that appeared in Theorem~\ref{thm:nole} are all that can exist:
 
 \begin{thm} \label{thm:genus4} Any non-orientable Lagrangian endocobordism  in  $\rr \times \rr^3$
  must have
 crosscap genus $4k$, for some $k \in \mathbb Z^+$. 
 \end{thm}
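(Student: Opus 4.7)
The plan is to derive the divisibility from a classical theorem of Audin, which constrains the Euler characteristic of closed, non-orientable Lagrangian surfaces in $\mathbb R^4$: any such surface must have Euler characteristic congruent to $2$ modulo $4$, equivalently, crosscap genus a positive multiple of $4$. The strategy is therefore to promote a non-orientable Lagrangian endocobordism $\overline L$ (with crosscap genus $g$, so $\chi(\overline L)=-g$) to a closed non-orientable Lagrangian in $\mathbb R^4$ and to arrange that its crosscap genus equals $g$ modulo $4$.

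First, I would use the symplectic equivalence of $(\mathbb R\times\mathbb R^3, d(e^t\alpha_0))$ with standard $(\mathbb R^4,\omega_0)$, noted in Section~\ref{ssec:lagrangians}, to regard $\overline L$ as a non-orientable Lagrangian surface in $\mathbb R^4$ with two boundary circles, each a copy of $\Lambda$. Next, I would cap off each of the cylindrical ends by constructing an exact Lagrangian cap with boundary $\Lambda$: for each end, concatenate $\overline L$ with a composition of exact Lagrangian cobordisms (via Lemmas~\ref{lem:isotopy} and~\ref{lem:surgery}(1)) carrying $\Lambda$ to the Legendrian unknot $U$ of maximal $tb=-1$, and then fill $U$ by the exact Lagrangian disk provided by Lemma~\ref{lem:surgery}(2). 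Because each such extension is an orientable cobordism of genus $0$ capped by a disk, it contributes only an even Euler characteristic correction and does not create or destroy crosscaps, so the resulting closed non-orientable Lagrangian $\hat L\subset\mathbb R^4$ has crosscap genus $g$ and satisfies $\chi(\hat L)=\chi(\overline L)+2=2-g$.

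Finally, I would apply Audin's theorem to $\hat L$ to obtain $2-g\equiv 2\pmod 4$, giving $g\equiv 0\pmod 4$. Non-orientability of $\overline L$ forces $g\ge 1$, so $g\in 4\mathbb Z^+$ as claimed.

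The main obstacle is the capping construction: for an arbitrary Legendrian $\Lambda$, an exact orientable Lagrangian cobordism from $\Lambda$ to a max-$tb$ unknot need not exist. To handle this, I would instead produce an \emph{immersed} Lagrangian cap for each end (which always exists for topological reasons) and then resolve the finitely many transverse self-intersections by Lagrangian surgery (Lemma~\ref{lem:lag-surgery}). Each such surgery modifies the topology by a connect sum with a torus or Klein bottle, and the contributions to the crosscap genus can be tracked carefully so that the parity class of $g$ modulo $4$ that ultimately appears in the closed Lagrangian $\hat L$ matches the crosscap genus of the original endocobordism. The bookkeeping for these surgeries, together with the verification that the closed surface remains non-orientable and Lagrangian, is the technical heart of the argument.
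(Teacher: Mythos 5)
Your high-level idea---close up the endocobordism and invoke Audin's constraint on closed non-orientable Lagrangians in $\rr^4$---is the same one the paper uses, but the execution contains two errors that cancel on paper and would sink the argument in reality. First, Audin's theorem is misquoted: it says the Euler characteristic of a closed non-orientable Lagrangian in $\rr^4$ is divisible by $4$ (Lemma~\ref{lem:no-euler}), equivalently the crosscap genus is $\equiv 2 \pmod 4$ (realized by Givental's examples of crosscap genus $6, 10, \dots$, with $\chi = -4, -8, \dots$), not ``$\chi \equiv 2 \pmod 4$, i.e.\ crosscap genus a multiple of $4$.'' Second, with the correct statement your capping scheme yields the wrong congruence: if you could attach caps contributing a total of $+2$ to the Euler characteristic (two disks, or genus-$0$ orientable cobordisms capped by disks), the closed surface would satisfy $2 - g \equiv 0 \pmod 4$, i.e.\ $g \equiv 2 \pmod 4$, contradicting the very theorem you are proving. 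The conclusion is that such caps cannot exist---indeed, an exact filling at one end plus an exact cap at the other would assemble into a closed exact Lagrangian in $\rr^4$, which Gromov forbids---so the ``main obstacle'' you flag at the end is not a technicality but the entire content of the proof, and the immersed-cap-plus-surgery bookkeeping you defer is exactly where the factor of $4$ has to come from.

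The paper sidesteps this by never computing what the caps contribute. Lemma~\ref{lem:cpct-closure} builds a single \emph{closed, non-orientable} Lagrangian $L$ (from two immersed cylinders traced out by Legendrian isotopies, with double points resolved by Lagrangian surgery) containing the trivial cylinder $[-T,T] \times \leg$ as a subset. Splicing the compact part of the endocobordism $C$ in place of that cylinder yields a second closed non-orientable Lagrangian $L'$ with $k(L') = k(L) + k(C)$. Applying Audin to both $L$ and $L'$ gives $k(L) \equiv k(L') \equiv 2 \pmod 4$, so the unknown contribution of the complement of the cylinder cancels in the difference and $k(C) \equiv 0 \pmod 4$. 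To salvage your version you would need to replace ``caps of total Euler characteristic $+2$'' with a difference argument of this kind.
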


This crosscap genus restriction is closely tied to Euler characteristic obstructions for {\it compact}, non-orientable submanifolds
that admit  Lagrangian embeddings in $(\rr^4, \omega_0)$, or equivalently in $(\rr \times \rr^3, d(e^t\alpha))$:

\begin{lem}[Audin, \cite{audin}] \label{lem:no-euler} Any compact, non-orientable Lagrangian submanifold of $\rr \times \rr^3$
has an Euler characteristic divisible by $4$.  
\end{lem}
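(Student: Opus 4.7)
The plan is to reduce to standard Euclidean space and then play off two Euler-number computations on the normal bundle $\nu L$ of $L$: one coming from the Lagrangian condition and one coming from the ambient embedding. Via the symplectic equivalence $(\rr \times \rr^3, d(e^t \alpha_0)) \cong (\rr^4, \omega_0)$ recalled in Section~\ref{ssec:lagrangians}, I can view $L$ as a closed, non-orientable, embedded Lagrangian in $(\rr^4, \omega_0, J_0)$, where $J_0$ is the standard complex structure.

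The Lagrangian condition ensures that $J_0$ rotates $T_pL$ isomorphically onto its symplectic orthogonal, and hence produces a smooth real bundle isomorphism $J_0|_L \colon TL \xrightarrow{\cong} \nu L$. Given a vector field $v$ on $L$ with isolated transverse zeros, $J_0 v$ is a section of $\nu L$ with the same zero set. Even though $\nu L$ is non-orientable, Whitney's convention yields a well-defined integer normal Euler number $e(\nu L)$: at each zero, a local orientation of $L$ is traded, via the ambient orientation of $\rr^4$, for a local orientation of $\nu L$, and the resulting local index is independent of the choice. Comparing local indices on any model Lagrangian $2$-plane shows that the contributions of $v$ and $J_0 v$ at each zero differ by a fixed sign $\epsilon \in \{\pm 1\}$ that does not depend on the zero, because $J_0|_L$ is a single global bundle map. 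Summing by Poincar\'e--Hopf gives
\[
e(\nu L) = \epsilon\, \chi(L).
\]

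The second input is the classical Whitney--Massey congruence: for any closed surface $L$ smoothly embedded in $\rr^4$,
\[
e(\nu L) \equiv 2\chi(L) \pmod{4}.
\]
This reflects the triviality of $T\rr^4|_L = TL \oplus \nu L$ and the resulting mod-$4$ relations on the integer-lifted Euler numbers of $TL$ and $\nu L$ (it is a tautology for orientable $L$, where both sides are $0$ mod~$4$). Combining with the previous identity yields $(2 - \epsilon)\chi(L) \equiv 0 \pmod{4}$, and since $2-\epsilon \in \{1,3\}$ is a unit modulo $4$, I conclude $\chi(L) \equiv 0 \pmod{4}$.

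The main obstacle will be the careful bookkeeping in the non-orientable setting: neither the Euler class of $TL$ nor that of $\nu L$ is an ordinary integer cohomology class, so both $\chi(L)$ and $e(\nu L)$ must be interpreted as signed zero counts via Whitney's convention. Verifying that the sign $\epsilon$ in $e(\nu L) = \epsilon\,\chi(L)$ is global (rather than depending on the zero) relies crucially on $J_0|_L$ being a smoothly varying bundle isomorphism, not merely a pointwise identification; the rest of the argument is essentially a matching of two classical congruences.
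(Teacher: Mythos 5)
Your argument is correct and is essentially the proof the paper attributes to Audin: the lemma appears in the paper only as a citation, accompanied by the remark that it extends Whitney's formula relating the normal Euler class of a surface in $\rr^4$ to the tangent bundle of a Lagrangian, and your two inputs --- the global isomorphism $TL \cong \nu L$ furnished by $J_0$, which gives $e(\nu L) = \epsilon\,\chi(L)$ with a single sign, combined with the Whitney congruence $e(\nu L) \equiv 2\chi(L) \pmod 4$ for closed surfaces embedded in $\rr^4$ --- constitute exactly that argument. The only caveat worth recording is that the Whitney--Massey congruence for non-orientable embedded surfaces is itself a nontrivial classical theorem that you are quoting rather than proving (your parenthetical about the triviality of $T\rr^4|_L$ does not establish it), which is a legitimate use of a black box here given that the paper treats Audin's result the same way.
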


This result can be seen as an extension of a formula of Whitney that relates the number of double points of a smooth
immersion to the Euler characteristic of the normal bundle of the immersion and thus of the 
tangent bundle of a Lagrangian immersion; see \cite{audin, audin-lalonde-polterovich}.
 
\begin{rem}  Lemma~\ref{lem:no-euler} implies that any compact, non-orientable,  Lagrangian submanifold $L$ in $\rr \times \rr^3$ 
has crosscap genus $2+4j$, for some $j \geq 0$.
There are explicit constructions of compact, non-orientable Lagrangian submanifolds of
crosscap genus $2 + 4j$, for all $j > 0$, \cite{givental:construct, audin:givental}.  It has been shown that there is no embedded, Lagrangian Klein bottle ($j = 0$),
\cite{nemirovski:klein, shevchishin:klein}.
\end{rem}

To utilize the crosscap genus restrictions for compact Lagrangians, we will employ the 
 following lemma, which shows that for any Lagrangian endocobordism,
 it is possible to construct a compact, non-orientable Lagrangian submanifold into which
we can glue the compact portion of a Lagrangian  endocobordism.

\begin{lem} \label{lem:cpct-closure} For any Legendrian knot $\leg \subset \rr^3$, any open set $D \subset \rr^3$ containing $\leg$,
and any $T \in \rr^+$, there exists
a compact, non-orientable Lagrangian submanifold $L$ in $\rr \times \rr^3$ so that 
$$L \cap ([-T , T] \times D) = [-T , T] \times \leg.$$
\end{lem}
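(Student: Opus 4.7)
The plan is to close off the Lagrangian cylinder $[-T,T]\times\leg$ into a compact closed surface by attaching immersed Lagrangian caps at its two ends, to resolve the resulting double points by Lagrangian surgery, and---if necessary---to take a Lagrangian connect sum with a compact non-orientable Lagrangian in order to guarantee non-orientability. All modifications will be supported strictly outside $[-T,T]\times\rr^3$, so the required intersection condition with $[-T,T]\times D$ will be automatic.

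The first and principal step is to produce an immersed closed Lagrangian $L^{\mathrm{imm}}\subset\rr\times\rr^3$ that contains $[-T,T]\times\leg$ as an embedded piece. For this I attach immersed Lagrangian caps $D_+\subset[T,T+K]\times\rr^3$ and $D_-\subset[-T-K,-T]\times\rr^3$, each an immersed Lagrangian disk with boundary the respective copy of $\leg$, cylindrical over $\leg$ near the boundary and with only finitely many transverse double points in its interior. The existence of such controlled immersed Lagrangian fillings for an arbitrary Legendrian knot in $\rr^3$ is a standard consequence of the $h$-principle for Lagrangian immersions; alternatively it may be obtained by an explicit construction from the front projection of $\leg$, capping off cusps by standard Lagrangian half-disks and resolving crossings by small Lagrangian saddles. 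Gluing along the cylinder yields $L^{\mathrm{imm}}=D_-\cup\bigl([-T,T]\times\leg\bigr)\cup D_+$, topologically a $2$-sphere, whose self-intersection set is contained in $\{|t|>T\}\times\rr^3$.

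Next I apply Lemma~\ref{lem:lag-surgery} at each transverse double point of $L^{\mathrm{imm}}$ to obtain an embedded compact Lagrangian $L'\subset\rr\times\rr^3$. Since each surgery is supported in a small neighborhood of its double point and every double point of $L^{\mathrm{imm}}$ lies in $\{|t|>T\}\times\rr^3$, the cylinder piece is unaltered; hence $L'\cap\bigl([-T,T]\times\rr^3\bigr)=[-T,T]\times\leg$, which together with $\leg\subset D$ gives $L'\cap([-T,T]\times D)=[-T,T]\times\leg$.

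It remains to ensure non-orientability. If $L'$ happens to be non-orientable, set $L=L'$. Otherwise, take a compact non-orientable Lagrangian $L_0\subset\rr^4$ (of crosscap genus $2+4j$, $j\geq 1$, as constructed by Givental and Audin~\cite{givental:construct,audin:givental}), scale and translate a copy into a small Darboux chart inside $\{|t|>T\}\times\rr^3$ disjoint from $L'$, introduce a single transverse double point between $L'$ and $L_0$ by a small Hamiltonian perturbation, and apply Lemma~\ref{lem:lag-surgery} once more to form the Lagrangian connect sum $L=L'\#L_0$, which is compact, non-orientable, and still satisfies $L\cap([-T,T]\times D)=[-T,T]\times\leg$. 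The main obstacle is the controlled construction of the immersed Lagrangian caps in the first step, both to manage the placement of double points and to keep each cap inside the prescribed half-slab; this is also the step most likely to be replaced by a self-contained argument, for instance by working inside a Weinstein tubular neighborhood of $\rr\times\leg$ and constructing there a compact non-orientable Lagrangian containing an arbitrarily long piece of the zero section.
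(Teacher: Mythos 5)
There is a genuine gap at the first step of your argument, which you yourself flag as the principal one: the immersed Lagrangian disk caps $D_\pm$ do not exist for an arbitrary Legendrian knot. A Lagrangian immersion of a compact surface $F$ has a Gauss map $F\to\Lambda(2)=U(2)/O(2)$ and hence a Maslov class $\mu\in H^1(F;\mathbb Z)$; if $F$ is a cap with $\partial F=\leg$, cylindrical near the boundary, then $\langle\mu,[\partial F]\rangle$ equals the Maslov index of the loop of Lagrangian tangent planes to $\rr\times\leg$ along $\leg$, which is $2r(\leg)$ in the standard trivialization of $\rr^4$. For a disk --- indeed for any compact \emph{orientable} $F$ with $\partial F$ a single circle --- the class $[\partial F]$ vanishes in $H_1(F;\mathbb Z)$, so this evaluation is forced to be $0$. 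Hence the $h$-principle cannot produce your caps when $r(\leg)\neq 0$: the formal obstruction is nonzero, and no orientable immersed Lagrangian cap of any genus exists. Since Lemma~\ref{lem:cpct-closure} is asserted (and used, in the proof of Theorem~\ref{thm:genus4}) for every Legendrian knot, including stabilized ones with $r(\leg)\neq 0$, this is not a repairable technicality of your route. The paper avoids the issue by never capping $\leg$ off with a surface that it bounds: it builds a closed \emph{immersed torus} from two cylinders, namely $[-t_2,t_2]\times\leg$ and the trace (in the sense of Remark~\ref{rem:isotopy}) of a Legendrian isotopy carrying a disjoint translate $\leg'$ to $\leg$ at both ends; a torus has nontrivial $H^1$, so there is no Maslov obstruction, the construction is explicit with no appeal to an $h$-principle, and the double points created by the moving cylinder are then removed by Lagrangian surgery (Lemma~\ref{lem:lag-surgery}).

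A secondary but also genuine error occurs in your final step: you cannot create ``a single transverse double point'' between the closed surface $L'$ and a disjoint closed Lagrangian $L_0$ by a small Hamiltonian perturbation. Both surfaces are null-homologous in $\rr^4$, so their algebraic intersection number is $0$ and transverse intersections come in pairs of opposite index; any perturbation produces an even number of intersection points. The fix is to arrange exactly two intersection points of indices $+1$ and $-1$ and surger both, which is precisely what the paper does in the proof of Theorem~\ref{thm:nole} when summing the trivial cylinder with a Lagrangian torus. (Had your caps existed, this last step would in fact be unnecessary: by Whitney's formula an immersed Lagrangian sphere in $\rr^4$ has self-intersection number $-1$, so at least one double point of index $-1$ is present and Lemma~\ref{lem:lag-surgery} would automatically produce a Klein bottle summand.)
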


\begin{proof}  The strategy will be to construct a Lagrangian immersion of the torus, thought of as two finite cylinders with
top and bottom circles identified, and then apply Lagrangian surgery to remove the immersion points.  
As a first step, we construct (non-disjoint) Lagrangian embeddings of two cylinders via Legendrian isotopies, Lemma~\ref{lem:isotopy}.
Namely, start with two disjoint copies of $\leg$: $\leg$ in $D$ and a translated version $\leg' \in \rr^3 - D$. 
Now, for $t \in [0, t_2]$, 
consider Legendrian isotopies $\leg_t$ of $\leg$ and $\leg_t'$ of $\leg'$  that satisfy the following conditions:
$\leg_t = \leg$, for all $t \in [0, t_2]$; $\leg_t' = \leg'$, for $t \in [0, t_1]$,  and then for 
$t \in [t_1, t_2]$, $\leg_t'$ is a Legendrian isotopy of  $\leg'$ so that $\leg_{t_2}' = \leg = \leg_{t_2}$.   
By repeating an analogous procedure for $t \in [-t_2, 0]$, we can obtain a smooth, immersion of the torus into $[-t_2, t_2] \times \rr^3$.  
The arguments used to prove Lemma~\ref{lem:isotopy} (see Remark~\ref{rem:isotopy}) show that for sufficiently large $t_2$, the image of the trace
of these isotopies can be perturbed to  two non-disjoint
embedded Lagrangian cylinders that do not have any intersection points in $[-t_1, t_1] \times \rr^3$.
 Then by applying Lagrangian surgery, Lemma~\ref{lem:surgery}, at each double point we get a 
compact, non-orientable Lagrangian submanifold $L$ in $\rr \times \rr^3$ with the desired properties.
\end{proof}

We are now ready to prove the crosscap genus restriction for arbitrary non-orientable, Lagrangian endocobordisms:

\begin{proof}[Proof of Theorem~\ref{thm:genus4}] Let $C$ be a non-orientable Langrangian endocobordism.  Suppose $C \subset \rr \times D$
and $C$ agrees with standard cylinder outside $[-T, T] \times \rr^3$.  By Lemma~\ref{lem:cpct-closure},  there is a compact, non-orientable 
Lagrangian submanifold $L$ in $\rr \times \rr^3$ so that 
$$L \cap ([-T, T] \times D) = [-T, T] \times \leg.$$  Let $L'$ be the Lagrangian submanifold obtained by removing the
standard cylindrical portion of $L$ in $[-T , T] \times D$ and replacing it with $C \cap ([-T, T] \times \rr^3)$.  Then $L'$ will be a  compact,
non-orientable  Lagrangian submanifold whose crosscap genus, $k(L')$, differs from the crosscap genus of $L$, $k(L)$,
 by the crosscap genus of  $C$, $k(C)$:
 $k(L') = k(L) + k(C)$.
  By Lemma~\ref{lem:no-euler},  there exist $j, j' \in \mathbb Z^+$ so that 
$k(L) = 2 + 4j$ and $k(L') = 2 + 4j'$.  Thus we find that  the crosscap genus of $C$,
$k(C)$, must
be divisible by $4$. 
 \end{proof}
 
 \begin{rem} For exact Lagrangian cobordisms that are constructed from isotopy and surgery, Lemmas ~\ref{lem:isotopy} and \ref{lem:surgery},
 it is possible to show that the crosscap genus must be a multiple of $4$ by an alternate argument
 that relies on a careful analysis of the possible changes to $tb(\leg)$ under surgery;
 \cite{CS:loops}.
 \end{rem}

 \section{Obstructions to Exact Non-Orientable Lagrangian Endocobordisms} \label{sec:exact-obstruct}
 
 We will now begin to focus on {\it exact}, non-orientable Lagrangian cobordisms.  In this section, we will prove
 Theorem~\ref{thm:fill-no-endo}, which states that any
 Legendrian knot that is exactly fillable does not have an exact non-orientable Lagrangian endocobordism.  The
 proof of this theorem will involve applying the Seidel Isomorphism, which relates the the topology of a filling to
 the linearized Legendrian contact cohomology of the Legendrian at the positive end.  We will then apply 
 Theorem~\ref{thm:fill-no-endo} and give examples of maximal $tb$ Legendrian knots that do not have
 exact, non-orientable Lagrangian endocobordisms.
   
 We begin with a brief description of Legendrian contact homology; additonal background
 information can be found, for example, in \cite{etnyre:knot-intro}.
  Legendrian contact homology is a Floer-type invariant of a Legendrian submanifold that
 lies within Eliashberg, Givental, and Hofer's Symplectic Field Theory framework; \cite{yasha:icm, egh, chv}.
It is possible to associate to a Legendrian submanifold $\leg \subset \rr^3$
 the stable, tame isomorphism class of an associative differential graded algebra (DGA),
 $(\mathcal A(\leg), \partial)$.  The algebra is freely generated by the Reeb chords of $\leg$,
 and is graded using a Maslov index.  The differential comes from counting pseudo-holomorphic
 curves in the symplectization of $\rr^3$; for our interests, we will always use $\mathbb Z/2$ coefficients.
  {\bf Legendrian contact homology}, namely the homology of $(\mathcal A(\leg), \partial)$, is
 a Legendrian invariant of $\leg$.
 
 In general, it is difficult to extract information directly from the Legendrian contact homology.  An important computational technique arises from the existence of augmentations of the DGA.
 An   
 {\bf augmentation} $\varepsilon$ of $\mathcal A(\leg)$ is a differential  algebra homomorphism
 $\varepsilon: (\mathcal A(\leg), \partial) \to (\mathbb Z_2, 0)$;
 a {\bf graded augmentation} is an augmentation so that $\varepsilon$ is
 supported on elements of degree $0$.  Observe that, for any Legendrian $\leg$, there are only
 a finite number of augmentations.
  Given a graded augmentation $\varepsilon$, one can linearize 
 $(\mathcal A(\leg), \partial)$ to a finite dimensional differential graded complex $(A(\leg), \partial^\varepsilon)$
 and obtain {\bf linearized contact homology}, denoted $LCH_*(\leg, \varepsilon; \mathbb Z/2)$, and
 its dual {\bf linearized contact cohomology},  $LCH^*(\leg, \varepsilon; \mathbb Z/2)$.  The set of all linearized (co)homology
 groups with respect to all possible graded augmentations is an invariant of $\leg$.  If the augmenation is ungraded,
 one can still examine the rank of the non-graded linearized (co)homology, $\dim LCH(\leg, \varepsilon; \mathbb Z/2)$, 
 and obtain as an invariant of $\leg$ the
 set  of  ranks of this total linearized (co)homology for all possible augmentations.  Examining ungraded linearized (co)homology
 is not an effective invariant: of the many examples of Legendrians knots in the Legendrian knot atlas of Chongchitmate and Ng, \cite{ng:atlas}, that have the
 same classical invariants
   yet  
 can be distinguished through graded Linearized homology, none of these can be distinguished  by examining
 ungraded homology.  However, ungraded (co)homology will be useful in arguments below.

  Ekholm, \cite{ekholm:rsft}, has shown that an exact Lagrangian filling, $F$, of a Legendrian submanifold $\leg \subset \rr^3$ induces an
  augmentation $\varepsilon_F$ of $(\mathcal A(\leg), \partial)$.  When this filling  has Maslov class $0$, the augmentation
  will be graded.  
   
The following result of Seidel will play a central role in showing obstructions to exact, non-orientable Lagrangian endocobordisms.  
  A proof of this result was sketched by Ekholm in  
 \cite{ekholm:lagr-cob} and
given in detail in Dimitroglou-Rizell, \cite{rizell:lifting}; a parallel result using generating family homology is given in \cite{josh-lisa:obstr}.

  \begin{thm}[Seidel Isomorphism, \cite{ekholm:lagr-cob}, \cite{rizell:lifting}, \cite{ehk:leg-knot-lagr-cob}]
   \label{thm:seidel} Let $\leg \subset \rr^3$ Legendrian submanifold
  with an exact Lagrangian filling $F$; let $\varepsilon_F$ denote the augmentation induced by the filling.  Then
 $$\dim H(F; \mathbb Z/2) = \dim LCH(\leg, \varepsilon_F; \mathbb Z/2).$$
  If the  filling $F$ of the $n$-dimensional Legendrian  has Maslov class $0$, then a graded version of the above equality holds:
 $$\dim H_{n-*}(F; \mathbb Z/2) = \dim LCH^*(\leg, \varepsilon_F; \mathbb Z/2).$$
 \end{thm}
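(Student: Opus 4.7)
The plan is to follow the strategy sketched by Ekholm in \cite{ekholm:lagr-cob} and executed rigorously by Dimitroglou Rizell in \cite{rizell:lifting}: build a Cthulhu-type bilinearized Floer complex from two copies of the filling and show that the total complex is acyclic. Concretely, I would take $F_0 = F$ and let $F_1$ be the perturbation obtained by wrapping the cylindrical positive end under the Reeb flow of $\leg$ for a large time $T$ and then pushing off by the Hamiltonian generated by a $C^2$-small Morse function $f\colon F \to \rr$. For generic choices, $F_0$ and $F_1$ intersect transversely in a finite set in bijection with $\mathrm{Crit}(f)$, while the mixed Reeb chords between the positive ends of $F_0$ and $F_1$ are in natural bijection with the Reeb chords of $\leg$.

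The next step is to consider the $\mathbb Z/2$-complex $\mathrm{Cth}(F_0, F_1)$ generated by these mixed Reeb chords together with the intersection points $F_0 \cap F_1$, with differential counting $J$-holomorphic curves with boundary on $F_0 \cup F_1$ and prescribed asymptotic behavior at the positive end. Because $F$ is a filling, there is no negative SFT end, and because $F$ is exact, action gives a uniform energy bound that rules out disk and sphere bubbling. Standard SFT bookkeeping then shows the differential has an upper-triangular form whose diagonal blocks compute, respectively, the bilinearized Legendrian contact cohomology differential $\partial^{\varepsilon_F,\varepsilon_F}$ on the chord piece and the Morse/Floer differential on the intersection piece; the first is quasi-isomorphic to $LCH^*(\leg,\varepsilon_F;\mathbb Z/2)$ and the second to the Morse cochain complex of $f$, hence to $H^*(F;\mathbb Z/2)$.

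The crux is to show the total complex is acyclic. For $T$ sufficiently large, the two copies of $F$ are Hamiltonian-displaced at the positive end, so a continuation/invariance argument identifies $\mathrm{Cth}(F_0,F_1)$ with the zero complex. The long exact sequence coming from the triangular splitting then forces a quasi-isomorphism between the Morse complex of $f$ and the bilinearized chord complex, yielding
\[
\dim H(F;\mathbb Z/2) = \dim LCH(\leg,\varepsilon_F;\mathbb Z/2).
\]
Under the Maslov class $0$ hypothesis, $LCH^*$ carries a well defined integer grading, the continuation and comparison maps preserve degree, and a careful comparison of Conley-Zehnder indices of Reeb chords with Morse indices on $F$, combined with the degree shift inherent to the mapping cone and Poincar\'e duality on $F$, refines the identity to the graded statement $\dim H_{n-*}(F;\mathbb Z/2) = \dim LCH^*(\leg,\varepsilon_F;\mathbb Z/2)$.

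The principal difficulty is analytic: establishing SFT-compactness and transversality for the moduli spaces defining the Cthulhu differential, identifying the triangular blocks with the claimed algebraic complexes, and verifying that for large $T$ the continuation maps actually realize the acyclicity. Exactness of $F$ is essential throughout, providing the uniform energy bound and excluding bubbling; without it, neither the identification of the blocks nor the displacement argument would go through.
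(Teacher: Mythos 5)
This theorem is not proved in the paper at all: it is imported as a black box, with the authors explicitly deferring to the sketch of Ekholm in \cite{ekholm:lagr-cob}, the detailed proof of Dimitroglou Rizell in \cite{rizell:lifting}, and the generating-family analogue in \cite{josh-lisa:obstr}. So there is no internal proof to compare against; what you have written is an outline of the argument in those cited sources rather than an alternative to anything in this paper. As such an outline it is accurate in its broad strokes: two copies of the filling, one wrapped at the positive end and pushed off by a $C^2$-small Morse function; a Floer-type complex generated by mixed Reeb chords and intersection points; an upper-triangular differential whose blocks are the linearized (here bilinearized with $\varepsilon_F$ twice) contact cohomology differential and the Morse differential on $F$; acyclicity of the total complex forcing the two blocks to have equal total rank; and an index comparison giving the graded refinement when the Maslov class vanishes. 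One step you should treat more carefully than your phrase ``Hamiltonian-displaced at the positive end'' suggests: the compact parts of $F_0$ and $F_1$ still intersect no matter how far you wrap the cylindrical ends, so acyclicity does not follow from literal displacement of the two Lagrangians. The argument in the references is an invariance-under-wrapping statement, namely that the homology of the complex is unchanged as $T$ varies while, for an appropriate (negative) wrapping, the generators can be eliminated or cancelled in pairs; you have flagged this as the crux, correctly, but as written the displacement claim is not quite the mechanism. Since the surrounding paper only uses the numerical equality of ranks to bound the number of topologically distinct exact fillings by the number of augmentations, citing the theorem as the authors do is the appropriate move; a full proof along your lines would be a substantial standalone analytic project.
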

 
 The ungraded version of the Seidel Isomorphism will be used to prove that any Legendrian $\leg$ that
 is exactly fillable does not have an exact, non-orientable Lagrangian endocobordism:

 \begin{proof}[Proof of Theorem~\ref{thm:fill-no-endo}]  For a contradiction, suppose that there is a Legendrian knot $\leg$ that
 has an exact Lagrangian filling and an exact non-orientable Lagrangian endocobordism.  Then
 by stacking the endocobordisms, Lemma~\ref{lem:gluing},  it follows that $\leg$ has an infinite number of topologically distinct
 exact, non-orientable Lagrangian fillings.  Each of these exact Lagrangian fillings induces an augmentation.  Since
 there are only a finite number of possible augmentations, there must exist two topologically distinct
 fillings that induce the same augmentation.  However, this gives a contradiction to the Seidel Isomorphism, Theorem~\ref{thm:seidel}.
    \end{proof}

 Theorem~\ref{thm:fill-no-endo} implies that 
on the set of Legendrian knots in $\rr^3$ that are exactly fillable, orientably or not, the
 relation  defined by exact, non-orientable Lagrangian cobordism is anti-reflexive.  Thus, by stacking, Lemma~\ref{lem:gluing},
 we immediately also see:

 \begin{cor} \label{cor:anti-symmetry} On the set of Legendrian knots in $\rr^3$ that are exactly fillable, orientably or not, the
 relation $\sim$ defined by exact, non-orientable Lagrangian cobordism is anti-symmetric:
 $\leg_1 \sim \leg_2 \implies \leg_2 \not\sim \leg_1.$
 \end{cor}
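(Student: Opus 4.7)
The plan is to argue by contradiction, combining the composition of cobordisms (Lemma~\ref{lem:gluing}) with the obstruction established in Theorem~\ref{thm:fill-no-endo}. Suppose for contradiction that $\leg_1$ and $\leg_2$ are exactly (orientably or non-orientably) fillable Legendrian knots with both $\leg_1 \sim \leg_2$ and $\leg_2 \sim \leg_1$. Unwinding the definition of $\sim$, there would then exist an exact, non-orientable Lagrangian cobordism $L$ from $\leg_+ = \leg_1$ to $\leg_- = \leg_2$, and a second exact, non-orientable Lagrangian cobordism $L'$ from $\leg_+ = \leg_2$ to $\leg_- = \leg_1$.

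Next, I would invoke Lemma~\ref{lem:gluing} to stack $L$ above $L'$, which yields an exact Lagrangian cobordism whose positive and negative ends are both $\leg_1$, i.e., an exact Lagrangian endocobordism for $\leg_1$. Each of $L$ and $L'$ contains an embedded M\"obius band by the non-orientability assumption, and these bands are transported into the stacked surface unchanged by the gluing, so the stacked cobordism is itself non-orientable. Equivalently, the crosscap genus is additive under stacking, so the result has positive (in fact, by Theorem~\ref{thm:genus4}, at least $8$) crosscap genus.

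Finally, because $\leg_1$ is exactly fillable by hypothesis, Theorem~\ref{thm:fill-no-endo} directly forbids the existence of any exact, non-orientable Lagrangian endocobordism for $\leg_1$. This contradicts the conclusion of the previous step, giving the required anti-symmetry. The analytic and algebraic substance of the argument has already been carried out in the proof of Theorem~\ref{thm:fill-no-endo} via the Seidel Isomorphism and the finiteness of the set of augmentations, so there is no essential obstacle remaining; the only item needing verification is the elementary fact that non-orientability is preserved under stacking of cobordisms.
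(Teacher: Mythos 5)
Your proof is correct and is essentially the paper's own argument: stack the two cobordisms via Lemma~\ref{lem:gluing} to obtain an exact, non-orientable Lagrangian endocobordism for $\leg_1$, contradicting Theorem~\ref{thm:fill-no-endo}. One inessential aside is slightly off: Theorem~\ref{thm:genus4} constrains only \emph{endo}cobordisms, so it gives crosscap genus at least $4$ for the composite rather than ``at least $8$'' (the individual pieces $L$ and $L'$ need not have crosscap genus a multiple of $4$), but this does not affect the argument.
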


 We now apply Theorem~\ref{thm:fill-no-endo} to give examples of Legendrians that do not have
 exact, non-orientable Lagrangian endocobordisms.
  Hayden and Sabloff, \cite{positivity}, showed that every positive knot type has a Legendrian representative
 that has an exact, orientable Lagrangian filling.  In addition,  Lipman, Reinoso, and Sabloff have shown that
 every $2$-bridge knot and every $+$-adequate knot has a Legendrian representative with an exact filling, \cite{lrs:filling}.
 Combining this with Theorem~\ref{thm:fill-no-endo}, immediately gives:
    
 \begin{cor}[\cite{positivity}, \cite{lrs:filling}]  Each positive knot, 2-bridge knot, and +-adequate knot has a Legendrian representative 
  that
 does not have an exact, non-orientable  Langrangian endocobordism.  
 \end{cor}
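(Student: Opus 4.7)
The plan is to combine the two cited existence results with Theorem~\ref{thm:fill-no-endo}. First, I would invoke the result of Hayden and Sabloff \cite{positivity}: for any positive knot type, there is a Legendrian representative admitting an exact, orientable Lagrangian filling in $\rr \times \rr^3$. In parallel, I would invoke the result of Lipman, Reinoso, and Sabloff \cite{lrs:filling}: every $2$-bridge knot and every $+$-adequate knot admits a Legendrian representative that is exactly (orientably) fillable. This gives, for each smooth knot type $K$ in any of the three classes, at least one Legendrian representative $\leg$ of $K$ that is exactly fillable.

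Second, I would apply Theorem~\ref{thm:fill-no-endo} directly to such a Legendrian $\leg$: since $\leg$ is exactly orientably fillable, it lies in the class of Legendrians to which the theorem applies, so $\leg$ does not admit any exact, non-orientable Lagrangian endocobordism. This yields the desired representative for each knot in the three specified classes.

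There is essentially no obstacle in this argument; the content of the corollary is entirely packaged in the two cited existence theorems together with Theorem~\ref{thm:fill-no-endo}. The only point worth making explicit is that, although the references \cite{positivity, lrs:filling} produce \emph{orientable} exact fillings, Theorem~\ref{thm:fill-no-endo} is stated for Legendrians that are exactly orientably \emph{or} non-orientably fillable, so the orientability of the filling supplied by the input is sufficient (and in fact not needed beyond the exactness).
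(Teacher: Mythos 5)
Your argument is correct and is exactly the paper's: cite \cite{positivity} and \cite{lrs:filling} to produce an exactly fillable Legendrian representative in each of the three knot classes, then apply Theorem~\ref{thm:fill-no-endo} to rule out exact, non-orientable Lagrangian endocobordisms. Your closing remark that orientable fillability suffices because the theorem covers both orientably and non-orientably fillable Legendrians is a fine (if unneeded) clarification.
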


   \begin{figure}
\begin{center}
	\includegraphics[height=2in]{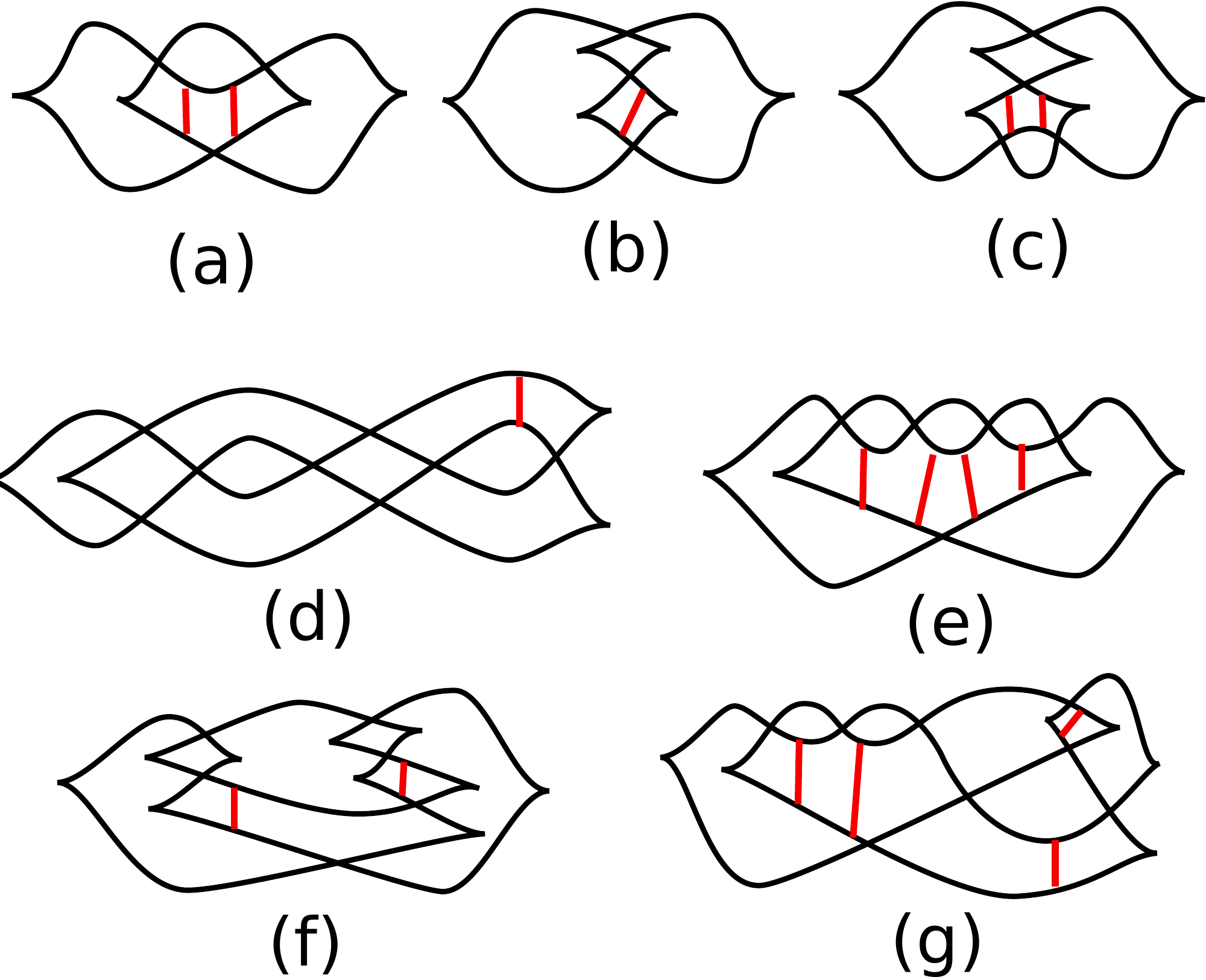}
\caption{Examples of Legendrians that do not have exact, non-orientable Lagrangian endocobordisms:  maximal $tb$ representatives of
(a) $m(3_1) = T(3,2) = K_{-2}$, (b) $3_1 = T(-3, 2) = K_1$,   
(c) $4_1 = K_2 = K_{-3}$,  (d) $5_1 = T(-5, 2)$, (e) $m(5_1) = T(5,2)$,  (f) $6_2$, and (g) $m(6_2)$.  The red lines indicate points
for surgeries.  
}
\label{fig:fillable}
\end{center}
\end{figure}

  Many maximal $tb$ representatives of  low crossing have fillings, orientable or not.  Figure
~\ref{fig:fillable} illustrates some Legendrians that can be verified to have exact, Lagrangian fillings: see Remark~\ref{rem:isotopy+surgery}.
 Many of the examples in Figure~\ref{fig:fillable} are Legendrian representatives of twist or torus knots.  Using
 Theorem~\ref{thm:fill-no-endo} together with
  classification results of Etnyre and Honda, \cite{etnyre-honda:knots}, and  Etnyre, Ng, and V\'ertesi, \cite{env},   
   we show that {\it all} maximal $tb$ represenatives of  twist knots,
  positive torus knots,  and negative torus knots of the form $T(-p, 2k)$, $p > 2k > 0$, do not have exact, 
  non-orientable Lagrangian endocobordisms:

  \begin{proof}[Proof of Corollary~\ref{cor:twist-torus}]  By Theorem~\ref{thm:fill-no-endo}, to show the non-existence
  of an exact, non-orientable Lagrangian endocobordism, it suffices to show the existence of an exact Lagrangian filling.
  
  First consider the case where $\leg$ is a maximal $tb$ representative of a twist knot, whose form is shown in 
  Figure~\ref{fig:top-twist}.              \begin{figure}
  \centerline{\includegraphics[height=.5in]{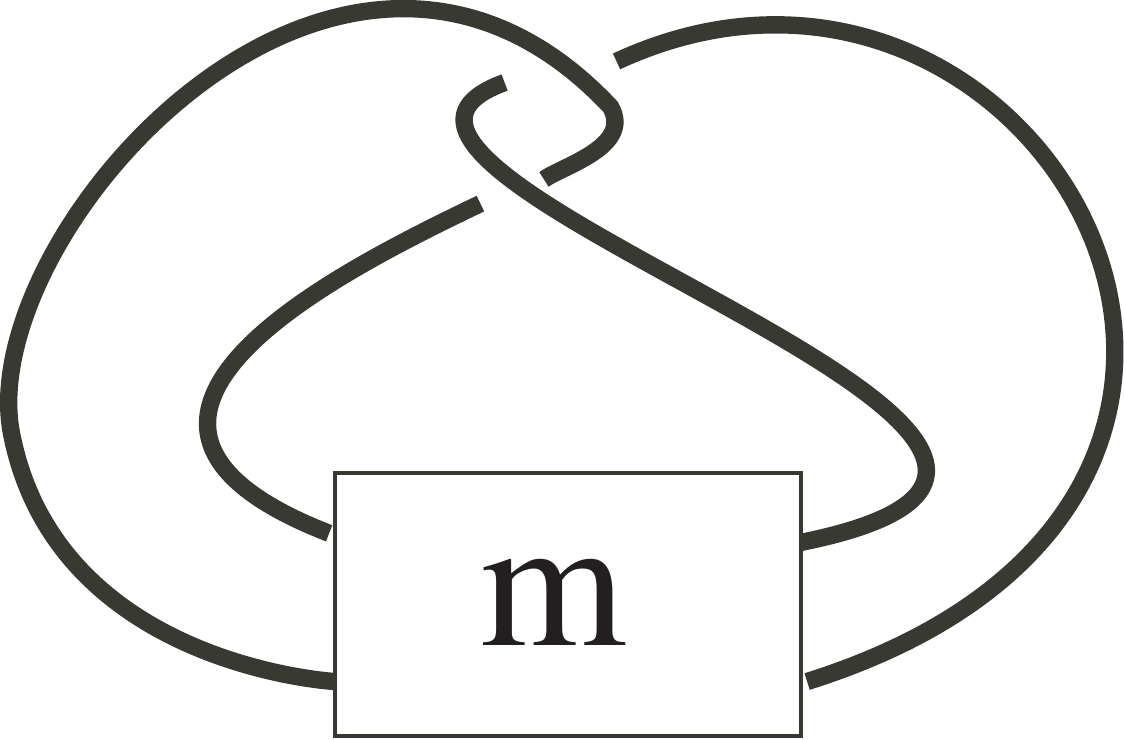}}
  \caption{The smooth twist knot $K_m$; the box contains $m$ right-handed half twists if
  $m \geq 0$, and $|m|$ left-handed twists if $m < 0$. Notice that $K_0$ and $K_{-1}$ are unknots.}
  \label{fig:top-twist}
\end{figure}
Etnyre, Ng, and V\'ertesi,  have  classified  all
  Legendrian twist knots, \cite{env}: every maximal $tb$ Legendrian  
  representative of $K_m$, for  $m \leq -2$, is Legendrian isotopic to one of the form
in Figure~\ref{fig:gen-neg-twist}, and  every maximal $tb$ Legendrian  
  representative  of  $K_m$, for $m \geq 1$,  
  is Legendrian isotopic to one of the form in Figure~\ref{fig:gen-pos-twist}. 
For a max $tb$ representative of a negative twist knot,
Figure~\ref{fig:gen-neg-twist} illustrates the two surgeries that show the existence of  an exact Lagrangian filling.
 For a max $tb$ Legendrian representative of a positive twist knot, the existence of an exact filling can be shown by an induction argument: 
Figure~\ref{fig:pos-twist-surg} (a), indicates surgery point when $m = 1$; for all $m \geq 1$, a maximal tb representative of $K_{m+1}$ can
be reduced to a  maximal $tb$ representative of $K_{m}$ by one surgery as indicated in Figure~\ref{fig:pos-twist-surg} (b).

   \begin{figure}
 \centerline{\includegraphics[height=.7in]{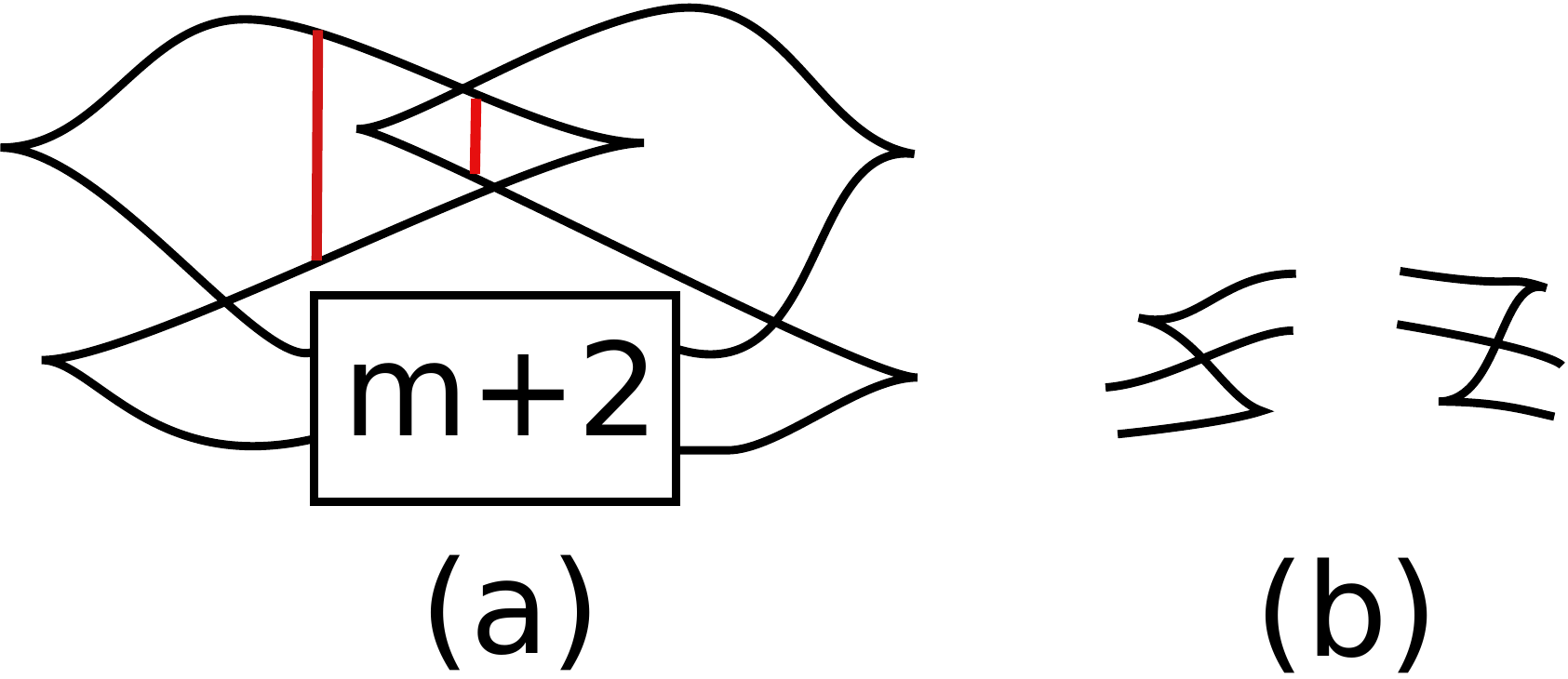}}
 \caption{Any maximal $tb$ Legendrian representative of a negative twist knot, $K_m$ with $m \leq -2$,  is Legendrian isotopic to one of the form in 
  (a) where the box contains $|m+2|$ half twists, each of form  $S$ as shown in   or of form $Z$ as
  shown in (b). Two surgeries produces a max $tb$ Legendrian unknot.}
  \label{fig:gen-neg-twist}
\end{figure}
 \begin{figure}
 \centerline{\includegraphics[height=.7in]{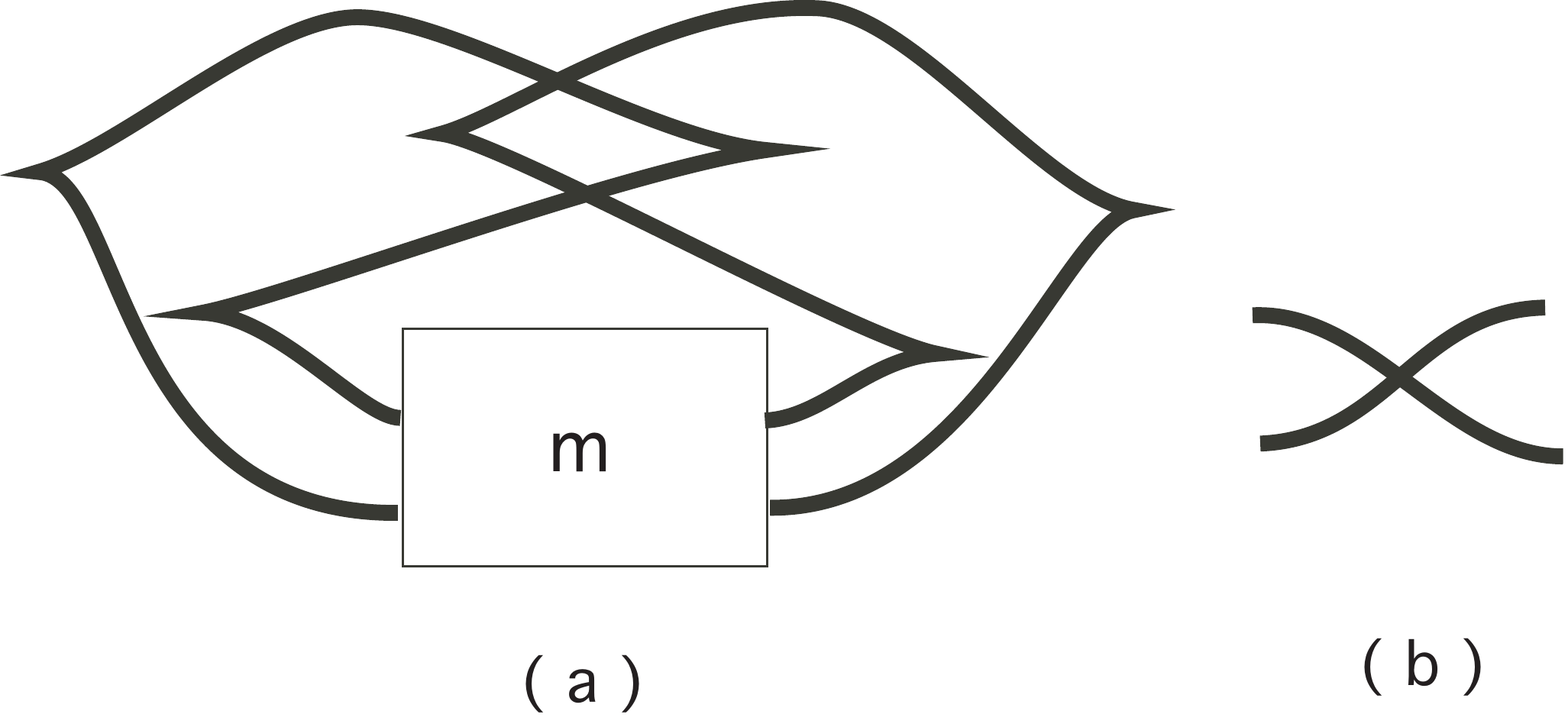}}
  \caption{Any maximal $tb$ Legendrian  representative of a positive twist knot, $K_m$ with $m \geq 1$,   is Legendrian isotopic to one of the form in 
  (a) where the box contains $m$ half twists, each of form $X$ as shown in (b).  }   \label{fig:gen-pos-twist}
\end{figure}
  \begin{figure}
 \centerline{\includegraphics[height=.7in]{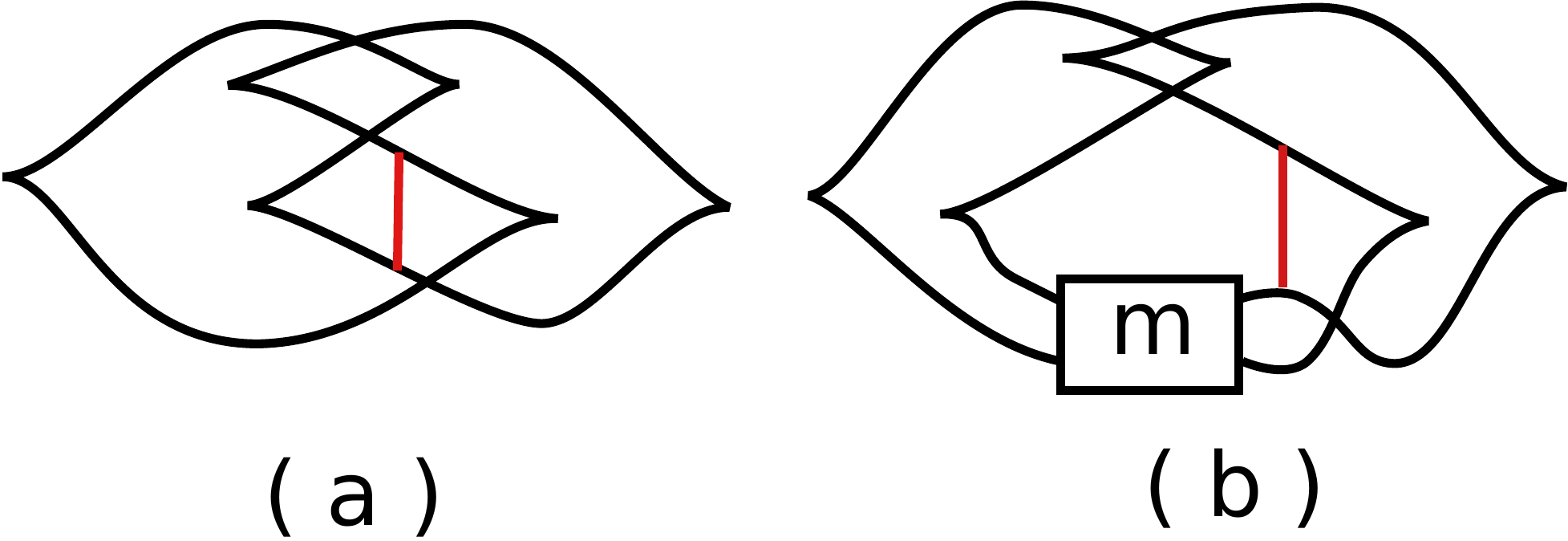}}
	  \caption{ An inductive argument shows that every max $tb$ representative of a positive twist knot has an exact Lagrangian filling.  
	  }
  \label{fig:pos-twist-surg}
\end{figure}

 Next consider maximal $tb$ Legendrian representatives of a torus knot,
 a knot that can be smoothly isotoped so that it lies on the surface of an unknotted torus in $\rr^3$.  Every torus
knot can be specified by a pair $(p,q)$ of coprime integers:  we will use the convention that 
the $(p,q)$-torus knot, $T(p,q)$, winds $p$ times around a meridonal curve of the torus and $q$ times in the longitudinal direction.
 In fact, $T(p, q)$ is equivalent to  $T(q, p)$ and to $T(-p, -q)$.  We will
 always assume that $|p| > q \geq 2$,  since we are interested in non-trivial torus knots.

 Etnyre and Honda, \cite{etnyre-honda:knots},  showed there is a unique  maximal $tb$ representative
 of a positive torus knot, $T(p,q)$ with $p > 0$.   The surgeries used  in \cite[Theorem 4.2]{bty}
show that each maximal representative is exactly fillable.  Figure~\ref{fig:pos-torus-surg} illustrates the orientable
surgeries for $(5,3)$-torus knot; in this sequence of surgeries,
   one begins with  surgeries on the innermost strands, and then
   performs a Legendrian isotopy so that it is possible to do a surgery on the next set of innermost strands.
 \begin{figure}
 \centerline{\includegraphics[height=.5in]{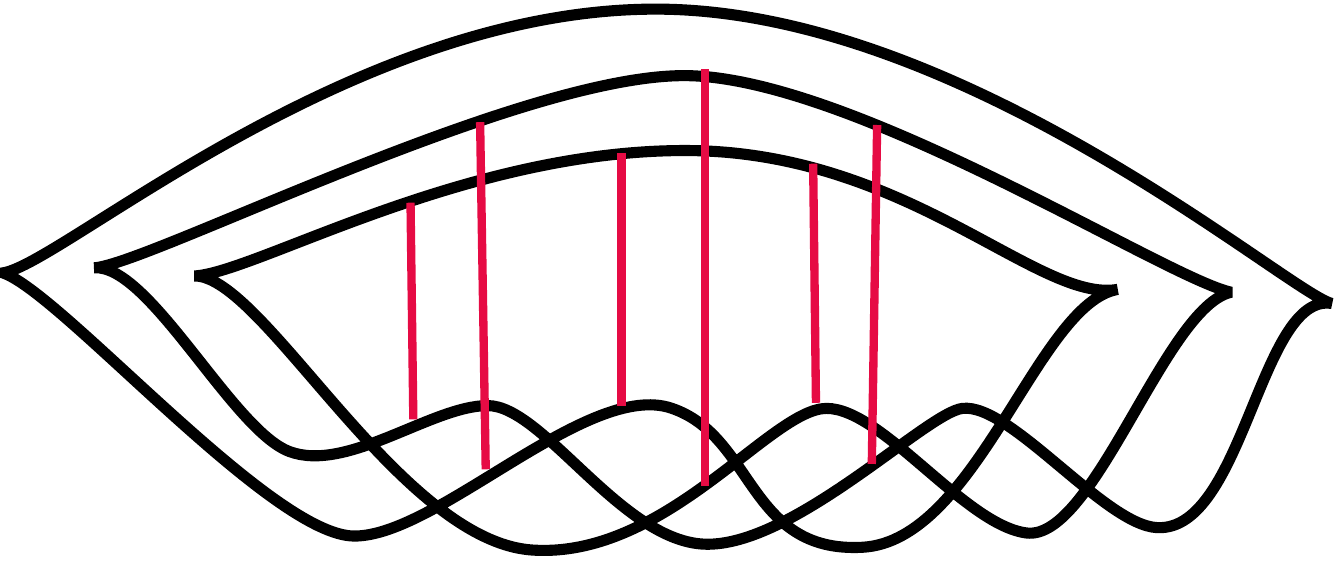}}
	  \caption{Surgeries that result in an exact filling of the maximal $tb$ representative of the positive torus knot $T(5,3)$.
	 }
  \label{fig:pos-torus-surg}
\end{figure}
  
 Lastly consider the case where $\Lambda$ is topologically a negative torus knot, $T(-p, 2k)$ with $p > 2k > 0$. In this case,
 Etnyre and Honda have shown that 
the number of different maximal $tb$ Legendrian representations  
 depends on the divisibility of $p$ by $2k$:  if
    $|p| = m (2k) + e$, $0 < e < 2k$,       
    there are $m$ non-oriented
     Legendrian representatives of $T(-p, 2k)$ 
     with maximal $tb$.
         These different representatives  with maximal $tb$
    are obtained by writing $m =  1 + n_1 + n_2$,  where $ n_1, n_2 \geq 0$,
    and  then $\Lambda_{(n_1, n_2)}$ is constructed using the form shown in Figure~\ref{fig:neg-torus-unknot} with $n_1$ and $n_2$ copies of the tangle  $B$ inserted as indicated;
    this figure also shows $k$ surgeries that guarantee the existence of an exact Lagrangian filling.  \end{proof}

      \begin{figure}
 \centerline{\includegraphics[height=1.5in]{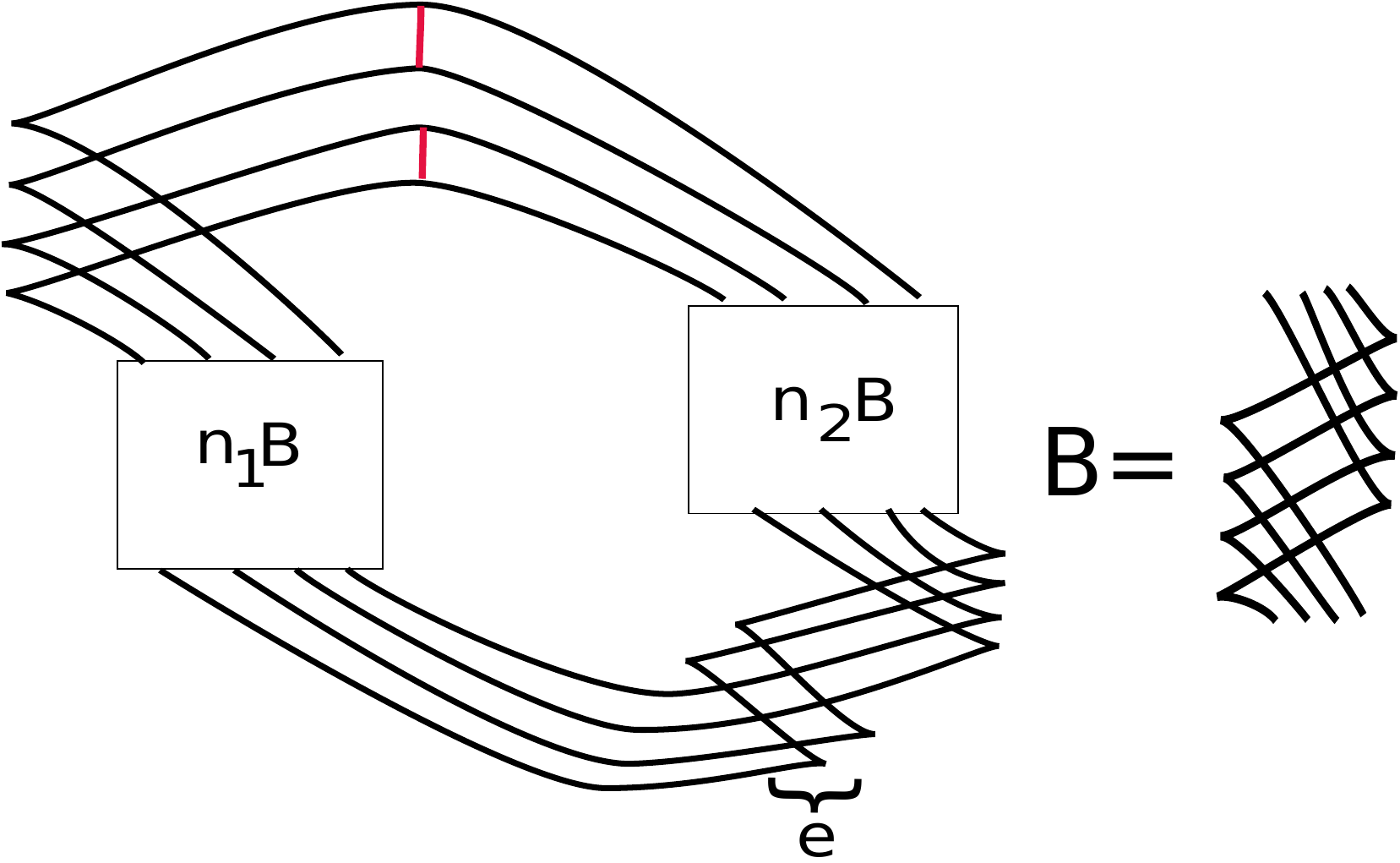}}
  \caption{The general form of a maximal $tb$ representative of a  negative torus knot $T(-p, 2k)$, with $p > 2k > 0$, with $k = 2$
  and $|p| = (1 + n_1 + n_2)(2k) + e$; $k$ surgeries produce a trivial Legendrian link of
  maximal $tb$ unknots.}
  \label{fig:neg-torus-unknot}
\end{figure}

Some  comments on obstructions to exact fillings are discussed in Section~\ref{sec:questions}.

\section{Constructions of Exact, Non-orientable Lagrangian Cobordisms}

In this section, we will construct an exact, non-orientable Lagrangian endocobordisms of crosscap genus $4$ for any stabilized Legendrian knot,
and a non-orientable Lagrangian cobordism between any two stabilized Legendrian knots.  All these exact Lagrangian cobordisms are
constructed through isotopy and surgery, see Remark~\ref{rem:isotopy+surgery}.

 Central to  these constructions will be the following lemma, which says that with respect to either orientation on  $\leg_+$ one can always introduce a pair of oppositely oriented
zig-zags, and if one has a pair of oppositely oriented zig-zags in $\leg_+$, then one can remove either element of this pair; see
Figure~\ref{fig:stab-destab}.

\begin{figure} 
\begin{center}
	\includegraphics[height=1in]{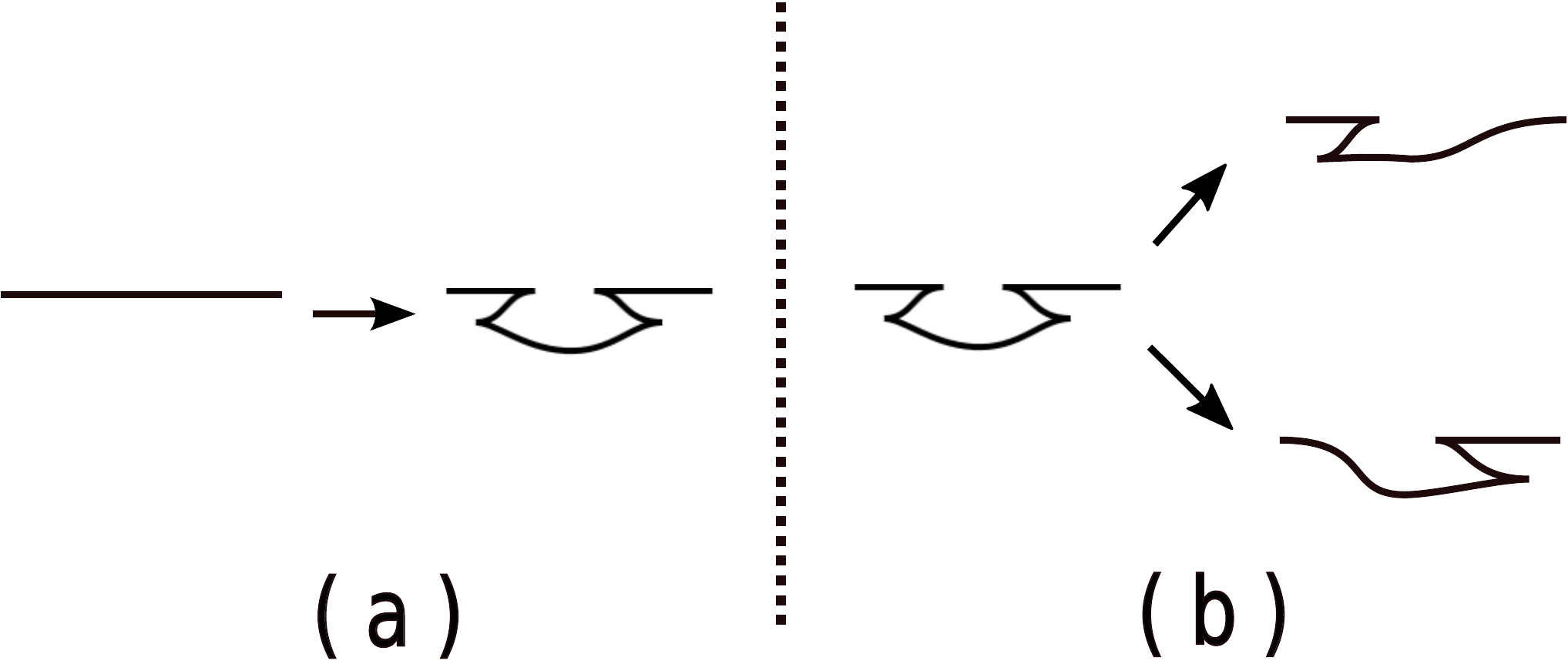}
\caption{Via isotopy and surgeries, at least one of which is non-orientable, it is possible to construct exact non-orientable Lagrangian
cobordisms between (a) $\leg_+ = \leg$ and $S_-S_+(\leg)$,  (b) $\leg_+ = S_-S_+(\leg)$  and
$\leg_- = S_+(\leg)$ or $\leg_- = S_-(\leg)$. } 
\label{fig:stab-destab}
\end{center}
\end{figure}

 \begin{lem}\label{lem:stab-destab}  Let $\leg$ be any oriented Legendrian knot. Then there exists an exact, non-orientable Lagrangian cobordism:
\begin{enumerate}
\item  of crosscap genus $2$ between $\leg_+ = \leg$ and
$\leg_- = S_-S_+(\leg)$;
\item of crosscap genus $1$ between $\leg_+ = S_-S_+(\leg)$  and
$\leg_- = S_+(\leg)$ or $\leg_- = S_-(\leg)$.
\end{enumerate}
\end{lem}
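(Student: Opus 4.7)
The plan is to invoke Remark~\ref{rem:isotopy+surgery} in both parts: it suffices to write each desired cobordism as a composition of Legendrian isotopies (Lemma~\ref{lem:isotopy}) and tangle surgeries (Lemma~\ref{lem:surgery}), with the length of the string equal to the claimed crosscap genus and containing at least one non-orientable surgery. Both constructions are local to a short arc of $\leg$ and are depicted schematically in Figure~\ref{fig:stab-destab}; the job is to justify that each displayed step is a legitimate isotopy or surgery and to count correctly.

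For part (2), I would start from $S_-S_+(\leg)$ and apply a local Legendrian isotopy that slides the down-zigzag and up-zigzag together so that two of the four cusps form a Legendrian $\infty$-tangle whose two strands carry the same orientation. This $\infty$-tangle is exactly the output of a non-orientable tangle surgery in Figure~\ref{fig:o-no-surgery}, so running that surgery in reverse -- a single non-orientable surgery replacing this $\infty$-tangle by its parent $0$-tangle -- removes the chosen pair of cusps. Depending on which pair is selected, the result is Legendrian isotopic to $S_+(\leg)$ or $S_-(\leg)$. The string has length one with a single non-orientable surgery, so Remark~\ref{rem:isotopy+surgery} gives an exact non-orientable Lagrangian cobordism of crosscap genus $1$.

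For part (1), the plan is to perform essentially the reverse construction twice. Starting from $\leg$, a Legendrian isotopy folds a short arc of the chosen strand back on itself, producing a local $0$-tangle whose two segments run in the same direction. A first non-orientable surgery then replaces this $0$-tangle with an $\infty$-tangle, adding two cusps and producing an intermediate Legendrian isotopic to a single stabilization of $\leg$. A second round of the same move -- isotope to expose an analogous same-orientation $0$-tangle near the newly created zigzag, then apply a non-orientable surgery -- inserts the opposite-type zigzag and yields $S_-S_+(\leg)$. The string has length two with two non-orientable surgeries, so Remark~\ref{rem:isotopy+surgery} gives an exact non-orientable Lagrangian cobordism of crosscap genus $2$.

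The main obstacle is the picture-level bookkeeping on the front projections in Figure~\ref{fig:stab-destab}. At each step I must confirm that the local $0$-tangle (or $\infty$-tangle) used is genuinely of the same-orientation type, so that the ensuing surgery is non-orientable rather than orientable; that the knot remains a single connected component after each surgery, so that no unwanted $S^1$ factor is introduced; and that a final Legendrian isotopy carries the resulting diagram onto the target (de)stabilized knot. These are routine verifications once the front pictures are drawn, but they require careful tracking of cusp positions and strand orientations through each isotopy and surgery.
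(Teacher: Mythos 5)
Your overall framing --- reduce everything to Remark~\ref{rem:isotopy+surgery} and count surgeries --- is the same as the paper's, but both of your constructions contain genuine errors. The fatal one is in part (2): you propose to locate an $\infty$-tangle inside $\leg_+ = S_-S_+(\leg)$ and ``run the surgery in reverse,'' replacing it by a $0$-tangle. Lemma~\ref{lem:surgery} only provides a cobordism in one direction: the $0$-tangle sits at the positive end and the $\infty$-tangle at the negative end, so cusps are \emph{added} as one moves from $\leg_+$ down to $\leg_-$. Reversing an exact Lagrangian cobordism is not a legal move --- the impossibility of doing so is the content of Chantraine's non-symmetry result and of Corollary~\ref{cor:anti-symmetry} --- so your step would actually produce a cobordism from $S_\pm(\leg)$ at the top down to $S_-S_+(\leg)$ at the bottom, the opposite of what part (2) asserts. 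The paper's construction (Figure~\ref{fig:stab-removal}) instead performs a \emph{forward} non-orientable surgery at a carefully chosen spot near the pair of oppositely oriented zig-zags; this adds two cusps, but the resulting front then cancels cusps via Legendrian Reidemeister moves, netting the desired destabilization to $S_+(\leg)$ or $S_-(\leg)$ depending on the orientation of the strand. That forward-surgery-plus-isotopy trick is the key idea of the lemma, and it is absent from your argument.

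Part (1) also has a problem: if you fold a short arc of a single strand back on itself, the two parallel segments of the resulting $0$-tangle are \emph{oppositely} oriented, not same-oriented, so the surgery performed there is orientable by the convention of Figure~\ref{fig:o-no-surgery} (and, being an orientation-coherent band move on a single component, it disconnects the knot). Your claims that the first surgery is non-orientable and that the intermediate Legendrian is a single stabilization of $\leg$ therefore do not hold as described. The paper's Figure~\ref{fig:adding-doub-stab} uses exactly this orientable surgery as the first step and then a second, non-orientable surgery to arrive at $S_-S_+(\leg)$; the crosscap genus count of $2$ is unaffected, since Remark~\ref{rem:isotopy+surgery} only requires at least one of the two surgeries to be non-orientable. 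The cusp and orientation bookkeeping that you defer to ``routine verification'' is precisely where your proposed moves break down.
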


\begin{rem} With non-orientable cobordisms,
given an orientation on $\leg_+$, there is no canonical orientations for $\leg_-$.  In Lemma~\ref{lem:stab-destab},
an orientation on $\leg_+$ is chosen so that there are well-defined $S_-(\leg)$ and $S_+(\leg)$, but the statement
implies that  $\leg_-$ can be $S_-S_+(\leg)$, $S_+(\leg)$, or $S_-(\leg)$ with either orientation.

\end{rem}

\begin{proof} The strategy will be to construct the desired exact, non-orientable Lagrangian cobordism via
Legendrian isotopy and surgeries that are performed on a portion of a strand.  Figure~\ref{fig:adding-doub-stab} 
illustrates the isotopy and surgeries, the second  of which is non-orientable, that implies the existence of a
crosscap genus $2$ Lagrangian cobordism between $\leg_+ = \leg$ and $\leg_- = S_-S_+(\leg)$.  Figure~\ref{fig:stab-removal}
illustrates the isotopy and surgery that implies the existence of 
 a crosscap genus $1$ Lagrangian cobordism between $\leg_+ = S_-S_+(\leg)$ and 
 $\leg_- = S_+(\leg)$, when the original strand is oriented from right to left, 
 or to   $\leg_- = S_-(\leg)$, when the original strand is oriented from left to right. \end{proof}

\begin{figure} 
\begin{center}
\includegraphics[width=3.in]{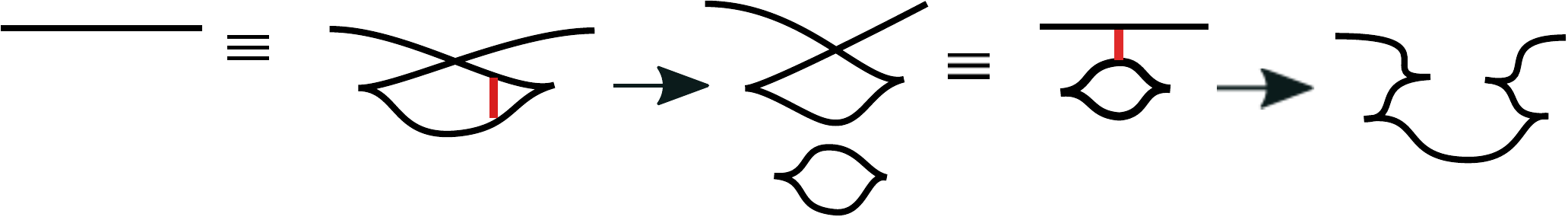}
\caption{By applying an orientable and a non-orientable surgery, any strand can have a pair of oppositely oriented zig-zags introduced.}
\label{fig:adding-doub-stab}
\end{center}
\end{figure}

\begin{figure} 
\begin{center}
\includegraphics[width=3in]{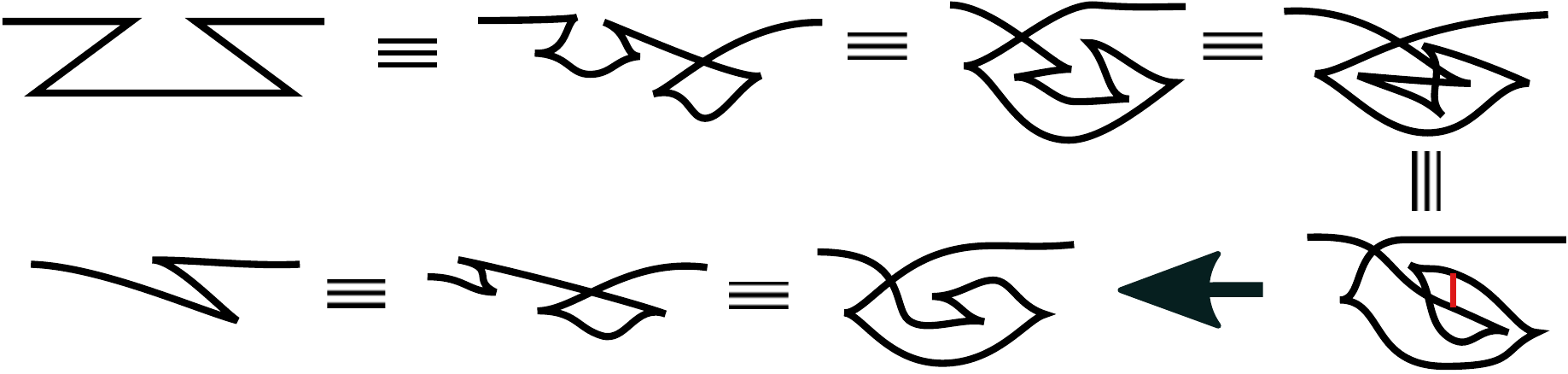}
\caption{    In the presence of oppositely oriented zig-zags, via one non-orientable surgery, one of the zig-zags can be removed.  }
\label{fig:stab-removal}
\end{center}
\end{figure}

\subsection{Exact, Non-Orientable Lagrangian Endocobordisms}

In Theorem~\ref{thm:fill-no-endo}, it was shown that Legendrians that are exactly fillable do not have exact, non-orientable Lagrangian
endocobordisms.  However exact, non-orientable Lagrangian endocobordisms do exist for stabilized knots:

\begin{proof}[Proof of Theorem~\ref{thm:reflexive}] First consider the case where $\leg$ is the negative stabilization of a Legendrian:  $\leg = S_-(\widehat \leg)$.
Then by applying Lemma~\ref{lem:stab-destab}, there exists an exact, non-orientable Lagrangian cobordism:
\begin{enumerate}
\item of  crosscap genus $2$ between $\leg$ and $S_-S_+(\leg)$;
\item of crosscap genus $1$ between $S_-S_+(\leg)$ and $S_+(\leg)$;
\item of crosscap genus $1$ between  $S_+(\leg) = S_+(S_-(\widehat \leg))$ and $S_-(\widehat \leg) = \leg$.
\end{enumerate}
Stacking these cobordisms results in an exact, non-orientable Lagrangian endocobordism of crosscap genus $4$.  Additional stacking
results in arbitrary multiples of crosscap genus $4$.  

An analogous argument proves the case where $\leg$ is the positive stabilization of a Legendrian:  $\leg = S_+(\widehat \leg)$.
\end{proof}

\subsection{Exact, Non-Orientable Lagrangian Cobordisms between Stabilized Legendrians}

Given that every stabilized Legedendrian knot has a non-orientable Lagrangian endocobordism,
a natural question is: What Legendrian knots can appear as a ``slice" of such an endocobordism?
In this section, we show that {\it any} stabilized Legendrian knot     can appear as such a slice.  
 
\begin{thm} \label{thm:different-knots-path}   For smooth knot types $K, K'$, let $\leg$ be any Legendrian
representative of $K$ and let $\leg'$ be a stabilized Legendrian
representative of $K'$.
  Then  there exists an exact, non-orientable Lagrangian cobordism between $\leg_+ = \leg$ and $\leg_- = \leg'$.
\end{thm}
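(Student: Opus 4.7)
My plan is to construct the desired cobordism as a composition of elementary exact Lagrangian cobordisms arising from Legendrian isotopies (Lemma~\ref{lem:isotopy}), Legendrian surgeries (Lemma~\ref{lem:surgery}), and the zig-zag manipulations of Lemma~\ref{lem:stab-destab}, assembled via Lemma~\ref{lem:gluing} in the spirit of Remark~\ref{rem:isotopy+surgery}. Since $\leg'$ is stabilized, write $\leg' = S_\varepsilon(\widehat{\leg'})$ for some $\varepsilon \in \{+,-\}$ and some Legendrian $\widehat{\leg'}$ of smooth type $K'$.

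First, I would apply Lemma~\ref{lem:stab-destab}(1) repeatedly to produce an exact, non-orientable Lagrangian cobordism from $\leg$ to a Legendrian $\leg^{(N)}$ that is Legendrian isotopic to $S_-^N S_+^N(\leg)$, with $N$ chosen large enough to accommodate what follows. With abundant zig-zags now available, I would transform the smooth knot type from $K$ to $K'$ via a sequence of Legendrian isotopies and non-orientable surgeries. The key local move is to Legendrian isotope a zig-zag so that, near a chosen crossing of the front diagram, two arcs appear in the co-oriented parallel configuration of Figure~\ref{fig:o-no-surgery}; a single application of non-orientable surgery (Lemma~\ref{lem:surgery}) combined with further isotopy then effects a smooth crossing change, consuming a zig-zag in the process. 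Since any two smooth knots are related by a finite sequence of crossing changes, iterating this yields an exact non-orientable Lagrangian cobordism from $\leg^{(N)}$ to some Legendrian $\widetilde{\leg}$ of smooth type $K'$, still carrying ample zig-zags.

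Finally, by the Fuchs--Tabachnikov theorem, Legendrian representatives of the same smooth knot type become Legendrian isotopic after sufficiently many stabilizations; after further applications of Lemma~\ref{lem:stab-destab}(1) to both sides if necessary, Lemma~\ref{lem:isotopy} yields an exact Lagrangian cobordism to a Legendrian of the form $S_-^c S_+^c(\leg')$ for some $c \geq 0$. Applying Lemma~\ref{lem:stab-destab}(2) a total of $2c$ times---each time removing a single zig-zag from an $S_-S_+$ pair, while retaining the $\varepsilon$-sign zig-zag belonging to $\leg'$---brings us to $\leg'$, and stacking all the intermediate cobordisms via Lemma~\ref{lem:gluing} yields the required exact, non-orientable Lagrangian cobordism from $\leg_+ = \leg$ to $\leg_- = \leg'$.

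The main obstacle is the middle phase: precisely implementing a smooth crossing change as a local sequence of Legendrian isotopies together with a single non-orientable surgery in the form demanded by Lemma~\ref{lem:surgery}. The profuse zig-zag structure produced by the first phase supplies the geometric flexibility needed for the local maneuver, but verifying the explicit front-projection picture---and checking that each intermediate Legendrian link is connected so that Remark~\ref{rem:isotopy+surgery} applies and that exactness is preserved throughout---is the principal diagrammatic content of the argument.
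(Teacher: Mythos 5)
Your overall architecture (stabilize heavily via Lemma~\ref{lem:stab-destab}(1), alter the smooth knot type by non-orientable surgeries, then invoke Fuchs--Tabachnikov and strip zig-zags with Lemma~\ref{lem:stab-destab}(2)) is in the right spirit, but the step you yourself flag as ``the main obstacle'' is a genuine gap, not merely a diagrammatic chore. A single $0$-tangle to $\infty$-tangle replacement (Lemma~\ref{lem:surgery}) is one saddle move: performed near a crossing, it realizes one of the two \emph{smoothings} of that crossing, not a crossing \emph{change}. A crossing change cannot be the trace of a single embedded saddle --- its natural trace is an immersed annulus with a double point, and resolving that double point costs two saddles and passes through an intermediate link --- and there is no reason that parking a zig-zag nearby converts a smoothing into a crossing change. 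No front-projection move accomplishing this is exhibited, and none exists in the generality you need. Consequently the middle phase of your cobordism, which is supposed to carry the smooth knot type from $K$ to $K'$, does not go through as written.

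The paper's proof avoids crossing changes entirely. It performs a surgery near each crossing of $\leg$ so that the resulting crossing can be cancelled by Legendrian Reidemeister moves (Figure~\ref{fig:cross-removal}), reducing $\leg$ to a disjoint union of stabilized Legendrian unknots, which are then merged by further surgeries into a single stabilized unknot $\leg_0$; the same procedure applied to $\leg'$ yields $\leg_0'$. The essential new idea is the \emph{reversal}: since the surgery relation is directional, the sequence taking $\leg'$ down to $\leg_0'$ must be undone surgery by surgery, each reversal costing extra zig-zags (Figure~\ref{fig:surg-reverse}), producing an exact, non-orientable cobordism from $\leg_0'$ to a stabilization $\widetilde\leg'$ of $\leg'$. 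This reversal is precisely the mechanism that changes the smooth knot type back from the unknot to $K'$, i.e.\ the job your unjustified crossing-change move was meant to do. Proposition~\ref{prop:fixed-knot-path} then supplies cobordisms from $\leg_0$ to $\leg_0'$ and from $\widetilde\leg'$ to $\leg'$, and stacking (Lemma~\ref{lem:gluing}) finishes the proof. To repair your argument, replace the crossing-change phase with this resolve-to-the-unknot-and-reverse scheme.
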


  Before moving to the proof of Theorem~\ref{thm:different-knots-path}, we show that non-orientable Lagrangian cobordisms define an equivalence
relation on the set of stabilized Legendrian knots:

\begin{proof}[Proof of Theorem~\ref{thm:equiv}]   Let $\mathcal L^s$ denote the set of all stabilized Legendrian knots of any
smooth knot type.  Define
the relation $\sim$ on $\mathcal L^s$ by $\leg_1 \sim \leg_2$ if there exists an exact, non-orientable Lagrangian cobordism
from $\leg_+ = \leg_1$ to $\leg_- = \leg_2$.  Reflexivity of $\sim$ follows from Theorem~\ref{thm:reflexive}.  Symmetry of
$\sim$ follows from Theorem~\ref{thm:different-knots-path}.  Transitivity of $\sim$ follows from Lemma~\ref{lem:gluing}. Thus $\sim$ defines an equivalence relation.  Moreover, by Theorem~\ref{thm:different-knots-path},
we see that with respect to this equivalence relation, there is only one equivalence class.
\end{proof}  

To prove Theorem~\ref{thm:different-knots-path}, it will be useful to first show that there is an
exact, non-oriented Lagrangian cobordism between any  two stabilized Legendrians of a fixed knot type:

\begin{prop} \label{prop:fixed-knot-path} Let $K$ be any smooth knot type, and let $\leg, \leg'$ be Legendrian representatives of $K$
where $\leg'$ is   stabilized.
Then there exists an exact, non-orientable Lagrangian cobordism between $\leg_+ = \leg$ and $\leg_- =  \leg'$.
\end{prop}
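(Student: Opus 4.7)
The plan is to build the cobordism in three stages and stack them via Lemma~\ref{lem:gluing}: an upward cobordism that adds stabilizations to $\leg$, an isotopy bridge, and a downward cobordism that strips away excess stabilizations to reach $\leg'$. The starting point is the Fuchs--Tabachnikov stabilization theorem~\cite{f-t}: since $\leg$ and $\leg'$ are Legendrian representatives of the same smooth knot type, there exist non-negative integers $a,b,a',b'$ so that $S_+^a S_-^b(\leg)$ is Legendrian isotopic to $S_+^{a'} S_-^{b'}(\leg')$.

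To build the upward cobordism from $\leg_+ = \leg$ to $S_+^a S_-^b(\leg)$, I will append the stabilizations one at a time. To append a single $S_+$, apply Lemma~\ref{lem:stab-destab}(1) to introduce an adjacent pair $S_-S_+$ (an exact non-orientable cobordism of crosscap genus $2$), then apply Lemma~\ref{lem:stab-destab}(2) to remove the unwanted $S_-$ (crosscap genus $1$); a single $S_-$ is appended analogously. Stacking $a+b$ such cobordisms via Lemma~\ref{lem:gluing} completes this stage. Then Lemma~\ref{lem:isotopy} applied to the Legendrian isotopy from the first paragraph produces an exact Lagrangian cobordism from $S_+^a S_-^b(\leg)$ to $S_+^{a'} S_-^{b'}(\leg')$.

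For the downward cobordism, since $\leg'$ is stabilized we may write $\leg' = S_\epsilon(\widehat{\leg}')$ for some $\epsilon \in \{+,-\}$. Starting from $S_+^{a'} S_-^{b'}(\leg')$, the goal is to remove the $a'$ extra positive and $b'$ extra negative stabilizations that were added during the first two stages. Using that zig-zags can be slid freely along strands via Legendrian isotopy~\cite{f-t}, I repeatedly position an unwanted $S_+$ adjacent to an $S_-$ and apply Lemma~\ref{lem:stab-destab}(2) to delete the $S_+$ (or symmetrically, delete an unwanted $S_-$ by pairing it with an $S_+$). A careful sequencing---alternating the removal of excess $+$ and $-$ zig-zags while both remain, then using the base zig-zag of $\leg'$ itself as an anchor for the final removal once the excess of one sign is depleted---keeps an oppositely-signed zig-zag available at every step, yielding the desired cobordism to $\leg'$.

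Stacking the three cobordisms via Lemma~\ref{lem:gluing} then gives an exact Lagrangian cobordism from $\leg$ to $\leg'$; it is non-orientable because the surgeries invoked by Lemma~\ref{lem:stab-destab} include at least one non-orientable one. The main obstacle is the bookkeeping in the downward cobordism: each application of Lemma~\ref{lem:stab-destab}(2) requires an adjacent $S_-S_+$ pair, so an oppositely-signed zig-zag must be present at every destabilization step. The stabilization hypothesis on $\leg'$ is exactly what makes this work---it supplies the anchoring zig-zag needed to complete the final destabilization after the excess of one sign has been exhausted, and without it the reduction to $\leg'$ could get stuck.
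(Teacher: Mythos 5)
Your proposal is correct and follows essentially the same route as the paper: invoke Fuchs--Tabachnikov to find a common stabilization, use Lemma~\ref{lem:stab-destab} to add oppositely signed zig-zag pairs and to delete single zig-zags (with the zig-zag already present in the stabilized $\leg'$ serving as the anchor for the final destabilizations), and stack everything via Lemma~\ref{lem:gluing}. The only difference is organizational---you interleave ``add a pair, delete one'' to append stabilizations one at a time, whereas the paper adds all $S_-S_+$ pairs first and strips the excess afterwards---which does not change the substance of the argument.
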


\begin{proof} Fix a smooth knot type $K$, and  let $\leg_1, \leg_2$ be Legendrian representatives where $\leg_2$ is stabilized.  
By results of Fuchs and Tabachnikov, \cite{f-t}, we know that there exists $r_1, \ell_1, r_2, \ell_2$ so that $S_-^{\ell_1} S_+^{r_1}(\leg_1) = S_-^{\ell_2} S_+^{r_2}(\leg_2)$.  
By applying additional positive stabilizations, if needed, we can assume $r_1 > \ell_1$.

Consider the case where $\leg_2$ is the negative stabilization of some Legendrian: $\leg_2 = S_-(\hat\leg_2)$.  
By applications of Lemma~\ref{lem:stab-destab},  there exists an exact, non-orientable Lagrangian cobordism between:
\begin{enumerate}
\item   $\leg_1$ and $S_-^{r_1} S_+^{r_1}(\leg_1)$;
\item   $S_-^{r_1} S_+^{r_1}(\leg_1)$ and  $S_-^{\ell_1} S_+^{r_1}(\leg_1)$, and thus between
$S_-^{r_1} S_+^{r_1}(\leg_1)$ and $S_-^{\ell_2} S_+^{r_2}(\leg_2)$;
\item  $S_-^{\ell_2} S_+^{r_2}(\leg_2)$ and  $S_+^{r_2}(\leg_2)$;
\item   $S_+^{r_2}(\leg_2) = S_+^{r_2}(S_-(\hat\leg_2)) $ and $S_-(\hat\leg_2) = \leg_2$.
\end{enumerate}
By stacking these cobordisms (Lemma~\ref{lem:gluing}), we have our desired exact, non-orientable Lagrangian cobordism between $\leg_1$ and $\leg_2$.  
An analogous argument proves the case where $\leg_2$ is the positive stabilization of some Legendrian.
\end{proof}

\begin{proof}[Proof of Theorem~\ref{thm:different-knots-path}]  The strategy here is to first show that one can construct an 
exact, non-orientable Lagrangian cobordism between $\leg$
and a stabilized Legendrian unknot $\leg_0$.  Similarly, it is possible to construct an exact, non-orientable Lagrangian cobordism between $\leg'$
and a stabilized Legendrian unknot $\leg_0'$; we will show it is possible to ``reverse" this sequence of surgeries and construct an
exact, non-orientable Lagrangian cobordism between $\leg_0'$
and $\widetilde \leg'$, which is a stabilization of $\leg'$.   By Proposition~\ref{prop:fixed-knot-path}, there exists 
an exact, non-orientable Lagrangian cobordism between $\leg_0$ and $\leg_0'$ and   between 
  $\widetilde \leg'$ and $\leg'$.  Thus by stacking, we will have the desired exact, non-orientable Lagrangian 
cobordism between $\leg$ and $\leg'$.

We first show how it is possible to construct an exact, non-orientable Lagrangian cobordism from $\leg$ to a Legendrian unknot; cf.,  \cite{bty}.  
Let $\leg$ be an arbitrary stabilized Legendrian knot. We can assume that $\leg$ has
at least one positive crossing by, if necessary, applying a Legendrian Reidemeister 1 move.  As shown in Figure~\ref{fig:cross-removal},
performing an orientable or non-orientable surgery near a crossing  produces a crossing  that can be removed through Legendrian Reidmeister moves.   Perform such a surgery on every crossing in $\leg$ until you have obtained $k$ disjoint stabilized Legendrian unknots;  since $\leg$ has at least one positive crossing, we have performed at least one 
non-orientable surgery.   Align the $k$ Legendrian unknots vertically and perform orientable or non-orientable surgeries so that we obtain a single stabilized Legendrian unknot $\leg_0$.  In this way, we have constructed an exact, non-orientable Lagrangian cobordism between
$\leg$ and $\leg_0$.

 \begin{figure}[h]
 \centerline{\includegraphics[width=3in]{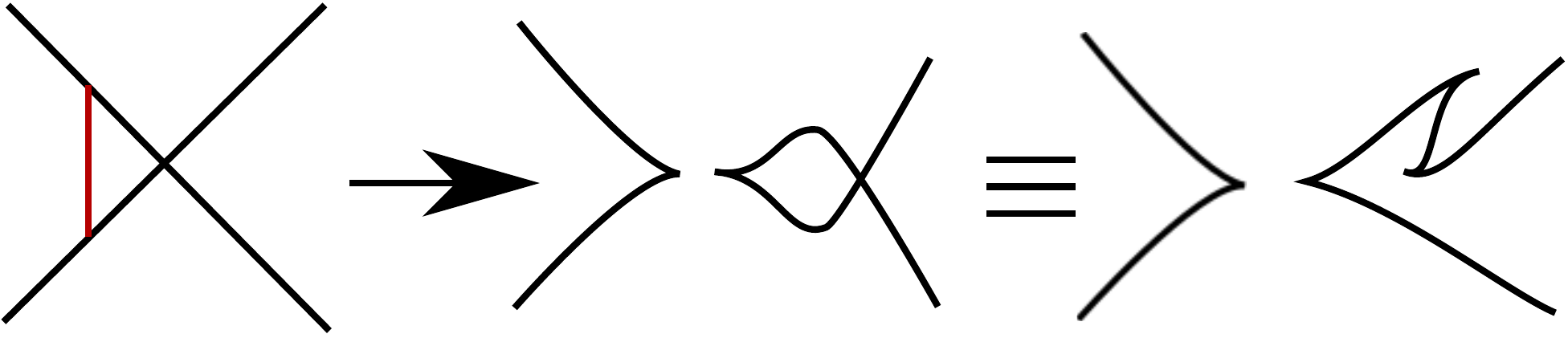}}
  \caption{For any Legendrian knot $\leg$, perform a surgery near each crossing in order to get a disjoint set of Legendrian unknots.
   }\label{fig:cross-removal}
\end{figure}
 \begin{figure}[h]
 \centerline{\includegraphics[width=3in]{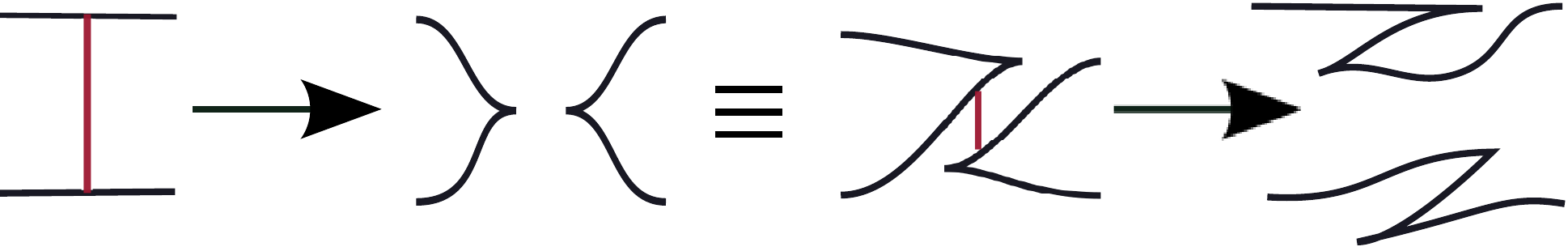}}
  \caption{Surgeries used to convert to a link of Legendrian unknots can be ``undone", at the cost of additional stabilizations.}\label{fig:surg-reverse}
\end{figure}

A similar procedure can be used to construct a sequence of surgeries from $\leg'$ to another Legnedrian unknot $\leg_0'$; 
now we show it is possible to ``reverse" this procedure  and construct a sequence of surgeries from   $\leg_0'$ to $\widetilde \leg'$, a Legendrian obtained by applying stabilizations to $\leg'$.   Figure~\ref{fig:surg-reverse} illustrates how every surgery that was used to get to a Legendrian unknot
can be undone at the cost of adding additional zig-zags into the original strands.  
Figure~\ref{fig:unknotting-knotting} illustrates this procedure in a particular example. 
 
As outlined at the beginning at the proof, these constructions  
prove the existence of an exact Lagrangian cobordism from $\leg_+ = \leg$ to $\leg_- = \leg'$. 
\end{proof}

\begin{figure}[h]
 \centerline{\includegraphics[width=3.5in]{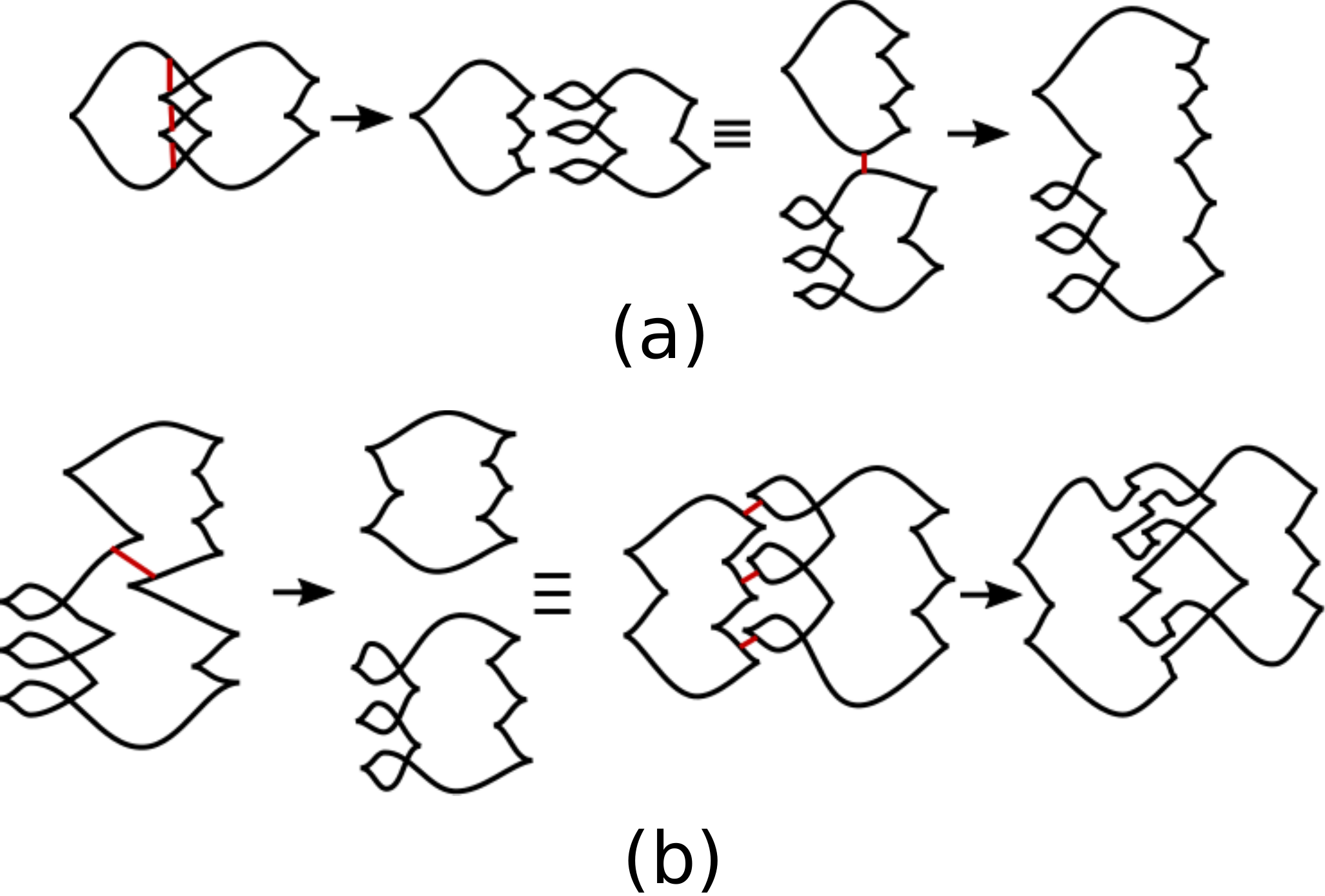}}
  \caption{(a) Surgeries that give rise to an exact non-orientable Lagrangian cobordism from the max $tb$ version of $3_1$ to a stabilized unknot.
  (b) Surgeries that give rise to an exact non-orientable Lagrangian cobordism from the stabilized unknot to  a stabilized representative of $3_1$.}
  \label{fig:unknotting-knotting}
\end{figure}

\section{Additional Questions}\label{sec:questions}
We end with a brief discussion of some additional questions.

From results above, we know that exactly fillable Legendrian knots do not admit exact, non-orientable Lagrangian endocobordisms
while stabilized Legendrian knots do.  There are examples of Legendrian knots that are neither exactly fillable nor stabilized.
As mentioned above, Ekholm, \cite{ekholm:rsft}, has shown that if $\leg$ is exactly fillable, then there exists an ungraded augmention of $\mathcal A(\leg)$.
By work of Sabloff, \cite{rulings}, and independently, Fuchs and Ishkhanov, \cite{fuchs-ishk}, we then know that there exists
an ungraded ruling of $\leg$.  Then it follows by work of Rutherford, \cite{rutherford:kauffman}, that the Kauffman bound on the
 maximal $tb$ value for all Legendrian representatives of
  the smooth knot type of $\leg$ is sharp.  Thus, if the Kauffman bound is not sharp for the smooth knot type
 $K$, then no Legendrian representative of $K$ is exactly fillable.  

 \begin{ques} \label{ques:maxtb-no-fill} If $\leg$ is a maximal $tb$ representative of a knot type $K$ for which
 the upper bound on $tb$ for all Legendrian representatives given by the Kauffman polynomial is not sharp, does $\leg$ have an exact,
 non-orientable Lagrangian endocobordism?  
 \end{ques}
\noindent
The Legendrian representative of $m(8_{19})$ mentioned in Question~\ref{ques:m(819)} satisfies the hypothesis in Question~\ref{ques:maxtb-no-fill}.
A list of some additional smooth knot types where the Kauffman bound is not sharp can be found in \cite[Section 4]{lenny:khovanov}.

There are also examples of Legendrians with non-maximal $tb$ that are not stabilized.  For example, $m(10_{161})$ is a knot
type where the unique maximal $tb$ representative has a  filling.  However, there are Legendrian representatives with non-maximal
$tb$ that do not arise as a stabilization.  As shown in  \cite[Figure 1]{clay-shea}, this Legendrian does have an ungraded ruling.
\begin{ques}  Does the non-stabilized, non-maximal $tb$ Legendrian representative of $m(10_{161})$ have an
exact, non-orientable Lagrangian endocobordism?
\end{ques}
\noindent
Additional examples of non-stabilized and non-maximal $tb$ representatives can be found in the Legendrian
knot atlas of Chongchitmate and Ng, \cite{ng:atlas}.

There are additional questions that arise from the constructions of fillings. For example,    
 it is known  by results of Chantraine, \cite{chantraine}, that orientable fillings realize the smooth $4$-ball genus.   In Figure~\ref{fig:fillable},
examples are given of non-orientable Lagrangian fillings of maximal $tb$ representatives of $6_2$ and $m(6_2)$ of
crosscap genus $2$ and $4$: the smooth 4-dimensional crosscap number of both $6_2$ and $m(6_2)$ is $1$.
 
\begin{ques}     Does there exist
a non-orientable Lagrangian filling of these Legendrian representatives of $6_2$ and $m(6_2)$ of crosscap genus $1$?
\end{ques}

\bibliographystyle{amsplain} 
\bibliography{main-no}

\end{document}